\newcommand{\Rmnum}[1]{\expandafter\@slowromancap\romannumeral #1@}
\newtheorem{theorem}{Theorem}
\newtheorem{proposition}{Proposition}
\newtheorem{remark}{Remark}%
\newtheorem{lemma}{Lemma}[section]
\newtheorem{definition}{Definition}
\newtheorem{assumption}{Assumption}
\newenvironment{shrinkeq}[1]
{\bgroup
	\addtolength\abovedisplayshortskip{#1}
	\addtolength\abovedisplayskip{#1}
	\addtolength\belowdisplayshortskip{#1}
	\addtolength\belowdisplayskip{#1}}
{\egroup\ignorespacesafterend}
\newcommand{\abs}[1]{\left\vert#1\right\vert}
\newcommand{\set}[1]{\left\{#1\right\}}
\newcommand{\seq}[1]{\left(#1\right)}
\newcommand{\bra}[1]{\left[#1\right]}
\newcommand{\E}{\mathbb{E}}
\newcommand{\PP}[1]{\mathbb{P}\left(#1\right)}
\newcommand{\z}{\bm{z}}
\newcommand{\x}{\bm{x}}
\newcommand{\X}{\bm{X}}
\newcommand{\uu}{\bm{u}}
\newcommand{\vv}{\bm{v}}
\newcommand{\T}{\mathrm{T}}
\newcommand{\eexp}[1]{\exp\left(#1\right)}
\newcommand{\rev}[1]{#1}
\newcommand{\red}[1]{#1}
\title{The $L_p$-error rate for randomized quasi-Monte Carlo self-normalized importance sampling of unbounded integrands}
\author[1]{Jiarui Du}
\author[2]{Zhijian He \thanks{Corresponding author: hezhijian@scut.edu.cn}}
\affil[1,2]{School of Mathematics, South China University of Technology, Guangzhou 510641, Guangdong, People’s Republic of China}
\begin{document}
	
	\maketitle
	
\begin{abstract}
Self-normalized importance sampling (SNIS) is a fundamental tool in Bayesian inference when the posterior distribution involves an unknown normalizing constant. \rev{In many applications, both the test function of interest and the underlying state space are unbounded, making direct $L_1$-error (mean absolute error) and $L_2$-error (root mean square error) estimates challenging for SNIS  under randomized quasi-Monte Carlo (RQMC) sampling.}  
In this work, we derive the $L_p$-error rate $(p\ge1)$ for RQMC-based SNIS (RQMC-SNIS) estimators with unbounded integrands on unbounded domains. 
A key step in our analysis is to first establish the $L_p$-error rate for plain RQMC integration. Our results allow for a broader class of transport maps used to generate samples from RQMC points. Under mild function boundary growth conditions, we further establish the \(L_p\)-error rate of order \(\mathcal{O}(N^{-\beta + \epsilon})\) for RQMC-SNIS estimators, where $\epsilon>0$ is arbitrarily small, $N$ is the sample size, and \(\beta \in (0,1]\) depends on the boundary growth rate of the resulting integrand. Numerical experiments validate the theoretical results.
\end{abstract}
	
\noindent\textbf{Key words}: Quasi-Monte Carlo, Self-normalized importance sampling, $L_p$-error rate, unbounded integrand.
	
	\section{Introduction}\label{intro}  
The Monte Carlo (MC) method is a standard approach for approximating expectations of the form $\pi(f) := \E_\pi[f(\bm{X})]$, where $\pi$ is the target distribution and $f$ is a test function. MC has been extensively applied in areas such as engineering, finance, and statistics \cite{glasserman2004}. Since MC requires sampling from $\pi$, situations where direct sampling is infeasible, most notably in Bayesian posterior inference, are typically addressed using importance sampling (IS). Given a tractable proposal distribution $q$, the standard IS estimator is defined as 
            \begin{equation}\label{eq:is}
            {I}_N^{\mathrm{IS}}(f) = \frac{1}{N} \sum_{i=1}^N \omega(\bm{x}_{i}) f(\bm{x}_{i}),
        \end{equation} where $N$ is  the sample size, $\bm{x}_{i} \sim q$, and \(\{\omega(\bm{x}_i)\}_{i=1}^N\) with \(\omega(\bm{x})=\pi(\bm{x})/q(\bm{x})\) are the importance weights.
        
        However, in most practical Bayesian problems, the posterior density $\pi(\bm{x})$ is known only up to a normalizing constant, rendering the standard IS estimator infeasible. This issue can be addressed by the self-normalized importance sampling (SNIS) estimator, which normalizes the weights: 
        \begin{equation}\label{eq:snis}
            \pi_N(f) = \sum_{i=1}^N \frac{\omega(\bm{x}_{i})}{{\sum_{j=1}^N \omega(\bm{x}_{j})}} f(\bm{x}_{i}).
        \end{equation}
        For bounded test functions, root mean square error (RMSE) bounds under independent and identically distributed (IID) sampling are well established \cite{sanz2023}. Agapiou et al. \cite{agapiou2017} extended these results by deriving RMSE bounds for unbounded test functions. Deligiannidis et al. \cite{deligiannidis2024} further generalized the analysis to arbitrary $L_p$-errors $(p\ge2)$ under suitable moment conditions on both the test function $f$ and the weight function $\omega$. In all cases, the RMSE converges at rate $\mathcal{O}(N^{-1/2})$, and the $L_p$-error established in \cite{deligiannidis2024} also achieves this rate, which may be slow for practical applications.

        \rev{The quasi-Monte Carlo (QMC) method approximates integrals by deterministic point
        sets rooted in uniform distribution theory, rather than IID random samples. For sufficiently regular integrands and suitably constructed point sets, QMC can achieve faster deterministic convergence rates than standard MC \cite{bookdick2010}.}
        In practice, randomized QMC (RQMC) is more commonly used. 
        \rev{Suitable randomizations, such as scrambling or random shifts, retain the uniformity properties of the underlying point set while introducing randomness that permits error estimation via independent replications.} 
        QMC and RQMC methods have been successfully applied in finance and statistics \cite{he2023,le2009,wang2017}. Given the advantages of RQMC over MC, several works have studied the incorporation of IS and SNIS with RQMC methods. For RQMC-based IS (RQMC-IS), Zhang et al. \cite{Zhang2021} investigated effective strategies for combining IS with conditional Monte Carlo and dimension-reduction techniques within the RQMC framework. Under the ``QMC-friendly" boundary growth conditions, He et al. \cite{he2023} proved the RMSE of RQMC-IS achieves a convergence rate of $\mathcal{O}(N^{-1+\epsilon})$, where $\epsilon>0$ is arbitrarily small, for unbounded functions defined on unbounded domains. The work \cite{he2023} also investigated the effect of different proposal distributions on the performance of RQMC-IS. The projection-based framework proposed by Ouyang et al. \cite{ouyang2024} provides an alternative approach for analyzing the RMSE convergence rate of RQMC-IS. 
        
        However, existing results for RQMC-based SNIS (RQMC-SNIS) do not cover the case of unbounded test functions on unbounded domains. 
        Dick et al. \cite{dick2019} derived an explicit error bound for the RQMC-SNIS estimator using the weighted star discrepancy. However, their result requires sufficiently regular conditions on the functions \(\omega\) and  \(f \), which are not satisfied when $f$ is unbounded. Gerber and Chopin \cite{sqmc2015} demonstrated the validity of RQMC-SNIS for unbounded test functions on bounded domains by introducing an extreme norm on \([0,1)^d\). However, extending this approach to unbounded domains is challenging. He et al. \cite{he2024} established an RMSE estimate for RQMC-SNIS under the assumption that the estimator $\pi_N(f)$ is uniformly bounded. To the best of our knowledge, for unbounded test functions on unbounded domains, even when the test functions are sufficiently smooth, there are currently no results on the $L_1$-error rate for the RQMC-SNIS estimator, let alone on the $L_p$-error rate for $p\ge 2$. In this work, we study the \( L_p \)-error rate for the RQMC-SNIS estimator allowing for unbounded test functions on unbounded domains. 

        A key step in establishing the \( L_p \)-error rate for MC-based SNIS \cite{deligiannidis2024} is to estimate the \( L_p \)-errors of $I_N^{\mathrm{IS}}(f)$ and $I_N^{\mathrm{IS}}(1)$ defined in \eqref{eq:is}. For IID sampling, these estimates are relatively straightforward and can be derived using the Marcinkiewicz–Zygmund inequality \cite{ferger2014}. In contrast, for RQMC, where $\bm{x}_i = \mathcal{T}(\bm{u}_i) \sim q$ with RQMC points $\{\bm{u}_i\}_{i=1}^N$ whose individual points are
        marginally distributed as \(\mathcal{U}(0,1)^d\) and a transport map $\mathcal{T}:(0,1)^d\to\mathbb{R}^d$, establishing the analogous $L_p$-error rate is substantially more challenging due to the inherent dependence among
        \(\bm u_1,\ldots,\bm u_N\). Existing results on the $L_p$-error rate for RQMC are limited to the inversion transformation $\mathcal{T} = \Phi^{-1}$, where $\Phi$ denotes the cumulative distribution function (CDF) of the standard normal distribution, and to test functions with sub-quadratic growth \cite{chen2025}.

        \rev{Our first contribution is to extend the existing RQMC \(L_p\)-error results from the standard-normal inverse-CDF setting to a broader class of transport maps, and from sub-quadratic growth to the critical quadratic-growth regime. Specifically, we show how the transport-map condition can be verified for several practically relevant transformations, including location-scale maps, component-wise nonlinear transformations, and composed transport maps. These verification results enlarge the class of proposal distributions available for the subsequent RQMC-SNIS analysis.} 
        
        Based on the established  $L_p$-error rate for plain RQMC integration, our second contribution derives the \(L_p\)-error rate for RQMC-SNIS under suitable moment conditions. The resulting rate is of order \(\mathcal{O}(N^{-\beta + \epsilon})\), where \(\beta \in (0,1]\) depends on the growth rate of $\omega f$. When $\omega f$ belongs to a broad class of “QMC-friendly” functions, $\beta$ approaches $1$, yielding a nearly \(\mathcal{O}(N^{-1})\) convergence rate. To the best of our knowledge, this is the first systematic $L_p$-error analysis of RQMC-SNIS for unbounded test functions on unbounded domains. Setting $p=1$ yields a mean absolute error rate, while $p=2$ corresponds to the RMSE rate. This contribution bridges a key theoretical gap in the analysis of RQMC-SNIS and substantially broadens its applicability to practical Bayesian computation.
    
	    The paper is organized as follows. Section \ref{sec:back} provides a brief introduction to QMC and RQMC. Section~\ref{sec:QMC} derives the $L_p$-error rates for the RQMC and RQMC-SNIS estimators \rev{under the component-wise inverse-CDF setting.
        Section \ref{subsec:apply} extends the applicability of the results in Section~\ref{sec:QMC} to admissible composed transport maps.}
        Section~\ref{sec:experiments} reports numerical experiments that validate the theoretical findings.  Conclusions are drawn in Section \ref{sec:conclusions}. \rev{Appendix~\ref{subsec:lemma} includes some auxiliary lemmas and technical proofs.} Some applications of the $L_p$-error rate can be found in the Appendix \ref{appen:Lp}. 
        
\section{Background and notation}\label{sec:back}
    \subsection{Notation}\label{subsec:notation}
    In this work, all norms are taken to be Euclidean norms. Define $1/0:=\infty$ and $(s)!! = 1$ for integer $s\le 0$. We write \(\uppi\) for the circle constant, to distinguish it from the target distribution \(\pi\). To prevent confusion, we adopt the convention that bold symbols (e.g., $\bm{u}$, $\bm{v}$) denote vectors, while ordinary symbols denote scalars. For instance, $\bm{u}_j$ refers to the $j$-th point in a low-discrepancy point set, whereas $u_j$ denotes the $j$-th component of a $d$-dimensional vector $\bm{u}$. We write $1\!:\!d := \{1, 2, \ldots, d\}$. Throughout this paper, we denote by $\bm{v}$ a subset of $1\!:\!d$, $\bm{u}$ a point in $(0,1)^d$, and $\bm{x}$ a point in $\mathbb{R}^d$. The notation $\bm{a}_{\bm{v}}\!:\!\bm{b}_{\bar{\bm{v}}}$ means a vector in $\mathbb{R}^d$ whose $j$-th component is $a_j$ if $j\in\bm{v}$ and $b_j$ otherwise.  In particular, \(\boldsymbol u_{\bm v}\!:\!\mathbf 1_{\bar{\bm{v}}}\) means that the coordinates outside \(\bm v\) are fixed at the upper anchor \(1\).   Furthermore, $\partial^{\bm{v}} h(\bm{u})$ denotes the mixed partial derivative of $h(\bm{u})$ with respect to $u_j$ for all $j \in \bm{v}$. For $\bm{a}=\left(a_1, \ldots, a_d\right) \in \mathbb{N}_0^d$ where $\mathbb{N}_0^d$ is the set of all $d$-dimensional vectors of non-negative integers, define $|\bm{a}|=a_1+\cdots+a_d$, and
    $$
    D^{\bm{a}} f(\bm{x}):=\frac{\partial^{|\bm{a}|}}{\partial x_1^{a_1} \ldots \partial x_d^{a_d}} f(\bm{x}), \quad \forall \bm{x} \in \mathbb{R}^d.
    $$
     
     For functions $f$ and $g$, we write $f = \mathcal{O}(g)$ if
     there exists a constant $C>0$ such that $|f|\le C|g|$.   \rev{The upper bound \(\mathcal{O}(\cdot)\) is used to state a convergence rate as \(N\to\infty\), where the implicit constant is independent of the sample size \(N\), but may suffer from the curse of dimensionality. In estimates involving the auxiliary projection radius \(r\), constants are chosen independently of \(r\) unless explicitly stated otherwise. Finally, to distinguish between various sources of randomness, the expectations $\mathbb{E}_{\mathrm{RQMC}}$, $\mathbb E_q$ and \(\mathbb E_{\mathcal U}\)  are taken with respect to the randomization of the QMC point set, the distribution with density \(q\), and \(\bm u\sim \mathcal U(0,1)^d\) respectively. Similar notations apply for probabilities. Let $(\Omega_{\mathrm{RQMC}},\mathcal F_{\mathrm{RQMC}},
        \mathbb P_{\mathrm{RQMC}})$ denote the probability space on which the randomization of the QMC point set is defined.}
    
	\subsection{Background on QMC integration}
		In this section, we provide a brief introduction to (R)QMC and review some recent results on the convergence rates of the RQMC method. Further details can be found in the monographs \cite{bookdick2022,bookdick2010,  booknieder1992}. 
		
		QMC is commonly used for the numerical evaluation of $d$-dimensional integral $\int_{(0,1)^d} h({\bm{u}}) \mathrm{d} {\bm{u}}$. The QMC estimator is \(\frac{1}{N} \sum_{i=1}^N h(\bm{u}_{i})\), where $\bm{u}_{i}\in (0,1)^d$. \rev{Unlike MC, which uses IID random samples, QMC uses deterministic point sets designed to achieve a high degree of uniformity over \((0,1)^d\). Such uniformity is frequently quantified by discrepancy. In this paper, we use the star discrepancy, defined by}
		\begin{equation}\label{eq:stardis}
				D_N^*\left(\bm{u}_{1}, \ldots, \bm{u}_{N}\right) = \sup_{\bm{z} \in (0,1)^d} \left|\frac{1}{N} \sum_{i=1}^N \mathds{1}_{\left\{\bm{u}_{i} \in [\mathbf{0}, {\bm{z}})\right\}} - \prod_{j=1}^d z_j\right|.
			\end{equation}
        Here \(\mathds{1}_{\{\mathcal{E}\}}\) denotes the indicator function of the event \(\mathcal{E}\). 
		A point set with a star discrepancy of $\mathcal{O}(N^{-1}(\log N)^{d-1})$ is referred to as a low-discrepancy point set. Commonly used low-discrepancy point sets include Sobol' point sets, Faure point sets, and others. 
        The classical Koksma-Hlawka inequality provides an error bound on the integration error:
		\begin{align}\label{eq:KH_inq}
				\left|\int_{(0,1)^d} h({\bm{u}}) \mathrm{d} {\bm{u}}-\frac{1}{N} \sum_{i=1}^N h(\bm{u}_{i})\right|\leq D_N^*\left(\bm{u}_{1}, \ldots, \bm{u}_{N}\right) V_{\mathrm{HK}}(h),
			\end{align}
        where \(V_{\mathrm{HK}}(h)\) denotes the variation of \(h\) in the sense of
Hardy and Krause. A function \(h\) is said to be of bounded variation in the sense of Hardy and
Krause (BVHK) if \(V_{\mathrm{HK}}(h)<\infty\). For completeness, we recall the definition of \(V_{\mathrm{HK}}(h)\); see Owen~\cite{owen2005hk} for further details.

\rev{A one-dimensional ladder on \([0,1]\) is a finite set
\[
    \mathcal Y^j
    =
    \{0=y_0^j<y_1^j<\cdots<y_{n_j}^j<1\}.
\]
The successor of \(y_k^j\in\mathcal Y^j\) is \(y_{k+1}^j\) if
\(k<n_j\), and is \(1\) if \(k=n_j\). A multidimensional ladder on
\([0,1]^m\) is a Cartesian product $ \mathcal Y
    =
    \mathcal Y^1\times\cdots\times\mathcal Y^m.$
For \(\bm y\in\mathcal Y\), its successor \(\bm y_+\) is defined
component-wise. The variation of \(\varphi\) defined over $[0,1]^m$ in the sense of Vitali is
\begin{equation}\label{eq:vit}
     V_{\mathrm{Vit}}(\varphi)
    =
    \sup_{\mathcal Y}\sum_{\bm y\in\mathcal Y}
    \left|
        \Delta(\varphi;\bm y,\bm y_+)
    \right|,
\end{equation}
where $\Delta(\varphi;\bm a,\bm b)=\sum_{\bm v\subseteq 1{:}m} (-1)^{|\bm v|}\varphi(\bm a_{\bm v}:\bm b_{\bar{\bm v}})$ denotes the $m$-fold alternating sum, and the supremum is taken over all multidimensional ladders on
\([0,1]^m\). For 
\(\emptyset\ne\bm v\subseteq1:d\), define $h_{\bm v}(\bm u_{\bm v})
    =
    h(\bm u_{\bm v}:\bm1_{\bar{\bm v}})$, $\bm u_{\bm v}\in[0,1]^{|\bm v|}.$
The Hardy--Krause variation of $h$ is
\begin{equation}\label{eq:hkvar}
     V_{\mathrm{HK}}(h)
    :=
    \sum_{\emptyset\ne\bm v\subseteq1:d}
    V_{\mathrm{Vit}}(h_{\bm v}).
\end{equation}
If \(\partial^{\bm 1:d}h\) exists on \([0,1]^d\), then \cite{owen2005hk} proved that
\begin{equation}\label{eq:hk}
    V_{\mathrm{HK}}(h)
    \le
    \sum_{\emptyset\ne\bm v\subseteq1:d}
    \int_{[0,1]^{|\bm v|}}
    \left|
    \partial^{\bm v}
   h(\bm u_{\bm v}:\bm1_{\bar{\bm v}})
    \right|\,d\bm u_{\bm v},
\end{equation}
where the equality holds if \(\partial^{\bm 1:d}h\) is continuous on \([0,1]^d\).
}
        
        To facilitate error estimation, RQMC is commonly used. Typical randomization methods include random shifts, 
        digital shifts, 
        and scrambling. 
        To derive the convergence rate of the RQMC method, Owen \cite{owen2006} introduced a growth condition for unbounded functions defined on $(0,1)^d$ and established the $L_1$-error rate using a low-variation extension strategy. Building on this approach, He et al. \cite{he2023} further derived the $L_2$ convergence rate by exploiting the fact that the scrambled net variance is no worse than a constant times the MC variance.
     
        Recently, Ouyang et al. \cite{ouyang2024} refined the growth condition for unbounded functions on $\mathbb{R}^d$, focusing on the standard normal target $\pi \sim \mathcal{N}(\bm{0},I_d)$, and established the RMSE convergence rates for RQMC using a projection operator.
        Chen et al. \cite{chen2025} showed that the projection method can also be employed to derive the $L_p$-error rate, thereby extending the error analysis of RQMC with $\pi \sim \mathcal{N}(\bm{0},I_d)$. \rev{However, strictly relying on the same component-wise projection operator, the results in \cite{chen2025} identically inherit the theoretical limitations of \cite{ouyang2024}: they are confined to functions with strictly sub-quadratic growth.}
        
	\section{The $L_p$-error rates of RQMC and RQMC-SNIS}\label{sec:QMC}

We focus on the $L_p$-error rate of the RQMC-SNIS estimator for $\pi(f)=\mathbb{E}_{\pi}[f(\bm X)]$ with an unbounded test function $f$ and unnormalized density $\bar \pi$. Let $q$ be a proposal density for IS satisfying $q(\bm x)>0$ whenever $\bar\pi(\bm x)>0$.
By a change of measure, we have
$$
        \pi(f) = \frac{\int_{\mathbb R^d} f(x)\bar\pi(x)\,dx}
       {\int_{\mathbb R^d} \bar\pi(x)\,dx} = \frac{\mathbb{E}_q[\omega(\bm X)f(\bm X)]}{\mathbb{E}_q[\omega(\bm X)]} := \frac{q(\omega f)}{q(\omega)},
$$
where $\omega= \bar\pi/ q$ and $q(g) := \int_{\mathbb{R}^d} g(\bm{x})q(\bm{x})d\bm{x}$. \rev{To obtain the $L_p$-error rate of the RQMC-SNIS estimator $\pi_{N}(f)$, it is critical to establish the $L_p$-error rates for estimating $q(g)$ with $g\in\{\omega f,\omega\}$.} 
    
    \subsection{\rev{Assumptions and definitions}} \label{subsec:ass}
Suppose that the proposal $q(\bm x)$ has independent marginal PDFs $q_j(x_j)$ for $j=1,\dots,d$, each of which is strictly positive and continuous on \(\mathbb R\). Assume that 
 \begin{equation}\label{eq:tau1}
 \bm X=\mathcal{T}(\bm u)=(\mathcal{T}_1(u_1),\dots,\mathcal{T}_d(u_d))\sim q,
 \end{equation}
where $\bm u=(u_1,\dots,u_d)\sim \mathcal{U}(0,1)^d$ and $\mathcal{T}_j$ is the inverse CDF for the distribution $q_j$. We use the extended-value convention $\mathcal{T}_j(0)=-\infty$ and $\mathcal{T}_j(1)=+\infty$.
More general proposals for $q$ are considered in Section \ref{subsec:apply}. Using the inversion method, the integral over $\mathbb{R}^d$ is transformed to an integral over $(0,1)^d$, given by
    $$
    q(g) =\int_{\mathbb{R}^d} g(\bm{x})q(\bm{x})d\bm{x} = \int_{(0,1)^d}g\circ \mathcal{T}(\bm{u})d \bm u,
    $$
and the RQMC estimator is given by
\begin{equation}\label{eq:rqmc}
    I_N(g)
    =
    \frac{1}{N}\sum_{i=1}^N g(\bm x_i)
    =
    \frac{1}{N}\sum_{i=1}^N g\circ \mathcal{T}(\bm u_i),
\end{equation}
where \(\{\bm u_1,\ldots,\bm u_N\}\) is an RQMC point set in \((0,1)^d\). We thus rewrite the RQMC-SNIS estimator $\pi_N(f)$ as
\begin{equation}\label{eq:snisest}
        \pi_{N}(f) = \frac{\frac 1N\sum_{i=1}^{N}\omega\left(\bm{\bm{x}}_i\right)f\left(\bm{\bm{x}}_i\right) }{\frac 1N\sum_{j=1}^{N}\omega\left(\bm{\bm{x}}_j\right)}  = \frac{I_N(\omega f)}{I_N(\omega)}.
\end{equation}

To derive the $L_p$-error bound for RQMC integration $I_N(g)$, we need some technical conditions on the RQMC point set $\left\{\bm{u}_{1}, \ldots, \bm{u}_{N}\right\}$, the function $g$, and the proposal $q$ as outlined below in order. 

        \rev{
        \begin{assumption}\label{ass:rqmc}
            Suppose that $\left\{\bm{u}_{1}, \ldots, \bm{u}_{N}\right\}$ is an RQMC point set satisfying the following two properties: (i) Marginal uniformity: each \(\bm u_i\) has marginal distribution \(\mathcal U(0,1)^d\). (ii) Almost surely discrepancy bound:    
            There exists a constant $B_d > 0$, independent of \(N\), such that
        \begin{equation}\label{eq:disc_bound}
                \mathbb P_{\mathrm{RQMC}}\left[D_N^*(\bm{u}_1, \ldots, \bm{u}_N) \le B_d \frac{(\log N)^{d-1}}{N}\right]=1.
            \end{equation}
        \end{assumption}
        \begin{remark}
        A standard example satisfying Assumption~\ref{ass:rqmc}, for sample sizes \(N=b^m\), is a digital \((t,m,d)\)-net in base \(b\), such as a Sobol' or Niederreiter point set, subjected to nested uniform scrambling proposed by Owen \cite{owen1995randomly}. See, for example,
\cite{bookdick2010,booknieder1992,owen1995randomly} for nested uniform scrambling, digital nets, and their discrepancy bounds.
        \end{remark}
        }

         \begin{assumption}\label{ass:f}
         Suppose that \(g\in \mathbb{C}^d(\mathbb R^d)\). Assume that there exist a growth rate \(M\in\mathbb R\) and a constant \(C>0\) such that, for every \(\bm a\in\mathbb N_0^d\) satisfying \(|\bm a|\le d\),
            $$
            \left|D^{\bm{a}} g(\x)\right| \le  C\exp \left\{M\|\x\|^2\right\}, \quad \forall \bm{x} \in \mathbb{R}^d.
            $$
        \end{assumption}

        \begin{remark} The projection-based analyses in \cite{chen2025,ouyang2024} treat the standard-normal inverse-CDF setting under sub-quadratic growth $\exp \left\{M\|\x\|^k\right\}$ with $k<2$. Assumption 2 allows the critical quadratic growth with $k=2$, which is needed for IS integrands arising from Gaussian-type proposals.
        \end{remark}

        \red{We now specify the tail conditions used below, which ensure that the tail probability of $q$ decays sufficiently rapidly to prevent the relevant expectations from diverging. 

        \begin{definition}\label{ass:x}
We say that a probability distribution $q$ satisfies a sub-Gaussian-type tail condition \cite{vershynin2018}, denoted by
\[
    q\in \mathrm{SG}(\eta,\alpha),
\]
if there exist constants \(\eta\in\mathbb Z\) and \(\alpha>0\) such that
$$
   \limsup_{t\to \infty} t^{-\eta}\exp(\alpha t^2) \mathbb P_{q}(\|\bm X\|>t)<\infty.
$$
\end{definition}}

\red{The following elementary tail integral bound will be used repeatedly.}
        \begin{lemma}\label{lem:exp}
            For any \red{\(s\in \mathbb{Z}\)}, constants $a>0$ and $t>1/\sqrt{2a}$, we have
            $$
            \int_t^{\infty} x^s \exp \left\{-a x^2\right\} d x \leq C_s(2a)^{-1}t^{s-1} \exp \left\{-a t^2\right\},
            $$
            where $C_s = \max\{(s+2)!!/2,1\}$.
        \end{lemma}
        \begin{proof}
            \red{For $s\le 0$ and $x \ge t > 0$, $x^{s-1} \le t^{s-1}$, then
\[
\begin{aligned}
    \int_t^{\infty} x^s \exp\{-a x^2\}\,dx
    &=
    \int_t^{\infty} x^{s-1}x\exp\{-a x^2\}\,dx  \\
    &\le
    t^{s-1}\int_t^{\infty}x\exp\{-a x^2\}\,dx = \frac{1}{2a}t^{s-1}\exp\{-a t^2\}.
\end{aligned}
\]
For $s>0$, this follows from the same Gaussian-tail estimate as Lemma A.2 of \cite{ouyang2024}.}
        \end{proof}

\begin{remark}\label{rem:gauss}
If $q$ is a standard Gaussian \(\mathcal N(\bm0,I_d)\), then
by the spherical-coordinate transformation and Lemma~\ref{lem:exp}, for \(t>1\),
\[
\begin{aligned}
    \mathbb P_{q}(\|\bm X\|>t)
    &=
    (2\pi)^{-d/2}2\pi^{d/2}/\Gamma(d/2)
    \int_t^\infty \exp(-r^2/2)r^{d-1}\,dr  \\
    &\le
    2^{-d/2}(d+1)!!/\Gamma(d/2) t^{d-2}\exp(-t^2/2).
\end{aligned}
\]
Thus $q\in  \mathrm{SG}(d-2,1/2).$ 
\end{remark}

        \subsection{\rev{The projection operator for error analysis}} \label{subsec:proj_op}  By the convention in Section \ref{subsec:notation}, the expectation $\mathbb E$ in this section is taken over the RQMC randomization.
        \rev{If $g$ satisfies Assumption~\ref{ass:f} with a negative-growth rate \(M<0\), 
        \(h(\bm u)=g\circ \mathcal{T}(\bm u)\) is of BVHK so that the Koksma-Hlawka \eqref{eq:KH_inq} is applicable (see Lemma~\ref{lem:negative_bvhk}). This is not the case for the nonnegative-growth rate $M\ge 0$.  We thus use a projection operator $P_r$ solely as a theoretical device in the error analysis, where $r>0$ is a tuning parameter. It is not involved in the actual RQMC or RQMC-SNIS estimators.}

By the triangle inequality, we bound
\(\mathbb E|I_N(g)-q(g)|^p\) by the three terms
\begin{equation}\label{eq:err0}
    \E\big|{I}_{N}(g)-{I}_{N}(g\circ P_{r})\big|^{p}, ~~ \E\big|{I}_{N}(g\circ P_{r})-q(g\circ P_r)\big|^{p}~~ \text{and} ~~ {\big|q(g\circ P_r)-q(g)\big|}^{p}.
\end{equation}
Let $Y_i = g(\mathcal{T}(\bm u_i))-g(P_{r}(\mathcal{T}(\bm u_i))), i = 1,2,\ldots,N$. \rev{Although the RQMC points $\{\bm u_i\}_{i=1}^N$ satisfying Assumption \ref{ass:rqmc} are dependent, the function $|\cdot|^p$ is convex for $p \ge 1$. By Jensen's inequality and the linearity of expectation, we have}
        \[ \E\big|{I}_{N}(g)-{I}_{N}(g\circ P_{r})\big|^{p} = \mathbb{E} \left| \frac{1}{N}\sum_{i=1}^N Y_i \right|^p \le \mathbb{E} \left( \frac{1}{N}\sum_{i=1}^N |Y_i|^p \right) = \frac{1}{N} \sum_{i=1}^N \mathbb{E}|Y_i|^p. \]
        Note that because each \(\bm {u}_i\sim \mathcal U(0,1)^d\)  marginally, the random variable \(\mathcal{T}(\bm {u}_i)\sim q\). Therefore, $\mathbb{E}|Y_i|^p = \mathbb{E}_{q} |g(\bm X) - g \circ P_r(\bm X)|^p$. Thus, we obtain
        \begin{equation}\label{eq:err1}
            \mathbb{E} |I_N(g) - I_N(g \circ P_r)|^p \le \mathbb{E}_q |g(\bm X) - g \circ P_r(\bm X)|^p.
        \end{equation}
        By Jensen's inequality again, it follows that
        \begin{equation}\label{eq:err2}
            {\big|q(g\circ P_r)-q(g)\big|}^{p} \le \E_{\red{q}}\left|g(\X)-g \circ P_{r}(\X)\right|^{p}.
        \end{equation}
        So it suffices to bound the  projection  error  $\E_{\red{q}}\left|g(\X)-g \circ P_{r}(\X)\right|^{p}$ and the quadrature error $\E\big|{I}_{N}(g\circ P_{r})-q(g\circ P_r)\big|^{p}$. We choose a suitable operator $P_r$ to make $g\circ P_{r}\circ \mathcal{T}(\bm u)$ BVHK and then choose $r$ depending on $N$ to minimize the $L_p$ error rate.

        We first recall the component-wise smoothed projection operator used in the
        earlier projection-based framework of~\cite{ouyang2024}. In one dimension, for \(r>1\), define \(P_r^{\mathrm{cw}}:\mathbb R\to\mathbb R\) by
        \begin{equation*}
         P_{r}^{\mathrm{cw}}(x) = 
        \begin{cases} 
        -r + \frac{1}{2}, & x \in (-\infty, -r], \\
        \frac{1}{2}x^2 + rx + \frac{(r-1)^2}{2}, & x \in (-r, -r + 1), \\
        x, & x \in [-r + 1, r - 1], \\
        -\frac{1}{2}x^2 + rx - \frac{(r-1)^2}{2}, & x \in (r - 1, r), \\
        r - \frac{1}{2}, & x \in [r, \infty).
        \end{cases}
        \end{equation*}
        For the multivariate case, the operator acts component-wise as 
        \begin{equation}\label{eq:ouyang}
            {P}_{r}^{\mathrm{cw}}(\boldsymbol{x}) = ({P}_{r}^{\mathrm{cw}}(x_1),\ldots,{P}_{r}^{\mathrm{cw}}(x_d)).
        \end{equation}
        \rev{Note that $ \|P_r^{\mathrm{cw}}(\bm x)\|
        \le
        \sqrt d\,r$ and $\exp\{M\|P_r^{\mathrm{cw}}(\bm x)\|^2\} \le
        \exp\{Mdr^2\}$ for \(M>0\), leading to an undesirable dimension factor in the convergence exponent, see Remark \ref{rem:necessity} for a detailed discussion. To remove this additional dimension factor from the convergence exponent, we construct a radially defined projection operator.}
        
        To ensure a smooth transition, we define
        \begin{equation}\label{eq:psi}
        \psi(t) = \frac{\int_0^t s^d(1-s)^d ds}{\int_0^1 s^d(1-s)^d ds},~0\le t\le1.
        \end{equation}
        This is the CDF of a symmetric Beta distribution with parameter $d+1$. Clearly, $0\le\psi(t)\le 1$, $\psi(0)=0$, and $\psi(1)=1$. 
        \begin{lemma}\label{lem:psi}
        For $1\le k \le d$,
        $D^k \psi(0) = D^k\psi(1) = 0.$
        Moreover, \red{
        $$\max_{1\le k\le d}\sup_{0<t<1}|D^k\psi(t)|\le \frac{(2d+1)!}{(d!)^2}(2d)^{d-1}.$$}
        \end{lemma}  
        
        \begin{proof}
            Write $\zeta(s) = s^d(1-s)^d$ and $c = 1/\int_0^1\zeta(s)ds = \frac{(2d+1)!}{(d!)^2}$, so that $\psi(t) = c\int_0^t\zeta(s)ds$. By the Leibniz rule, $D^k\psi(t) = cD^{k-1}\zeta(t),1\le k \le d$, where
            \begin{align*}
                D^{k-1}\zeta(t) = \sum_{i=0}^{k-1}\binom{k-1}{i}D^{i}(t^d)D^{k-1-i}((1-t)^{d}).
            \end{align*}
            Evaluating at the boundaries, for $1 \le k \le d$, we have 
            $$D^k\psi(0) = cD^{k-1}\zeta(0) = 0,\quad D^k\psi(1) = cD^{k-1}\zeta(1) = 0.$$
            \red{Moreover, for any $0<t<1$ and $1\le k \le d$,
            $$
            |D^k\psi(t)| = |cD^{k-1}\zeta(t)| \le c\sum_{i=0}^{k-1}\binom{k-1}{i}d^{i}d^{k-1-i} = c  (2d)^{k-1} \le \frac{(2d+1)!}{(d!)^2}(2d)^{d-1}. 
            $$}
            This completes the proof.
        \end{proof} 

        We now define the radial projection operator \(P_r:\mathbb R^d\to\mathbb R^d\) by
        \begin{equation}\label{eq:proj}
            P_{r}(\x)= \begin{cases}\x, & \|\x\| \leq (1-\delta )r, \\ (1-\psi(t(\bm x))) \x, & (1-\delta )r<\|\x\|<r, \\ \bm{0}, & \|\x\| \geq r,\end{cases} 
        \end{equation}
        where $(1-\delta )r > 1$, $t(\bm x) = (\|\x\|-(1-\delta )r)/(\delta r)$, and  $0< \delta < 1/2$ is a sufficiently small constant. Denote the $i$-th component of $P_{r}(\x)$ by $P_r^{(i)}(\bm x)$, $i = 1,2,\ldots, d$.

            \begin{figure}[htbp]
        \vspace{-4mm}
        \centering
        \subfigure[Our  projection operator \eqref{eq:proj}]{\label{pr_our}
        	\begin{minipage}[t]{0.4\linewidth}
        		\centering
        		\includegraphics[width=0.9\linewidth]{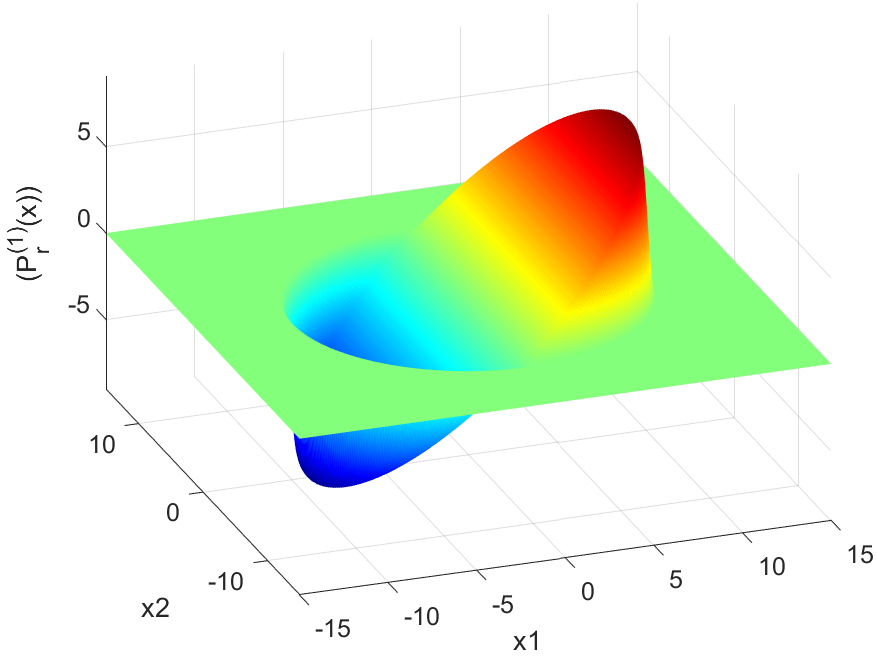}
        	\end{minipage}
        }
        \subfigure[The projection operator \eqref{eq:ouyang} 
        ]{\label{pr_ouyang}
        	\begin{minipage}[t]{0.4\linewidth}
        		\centering
        		\includegraphics[width=0.9\linewidth]{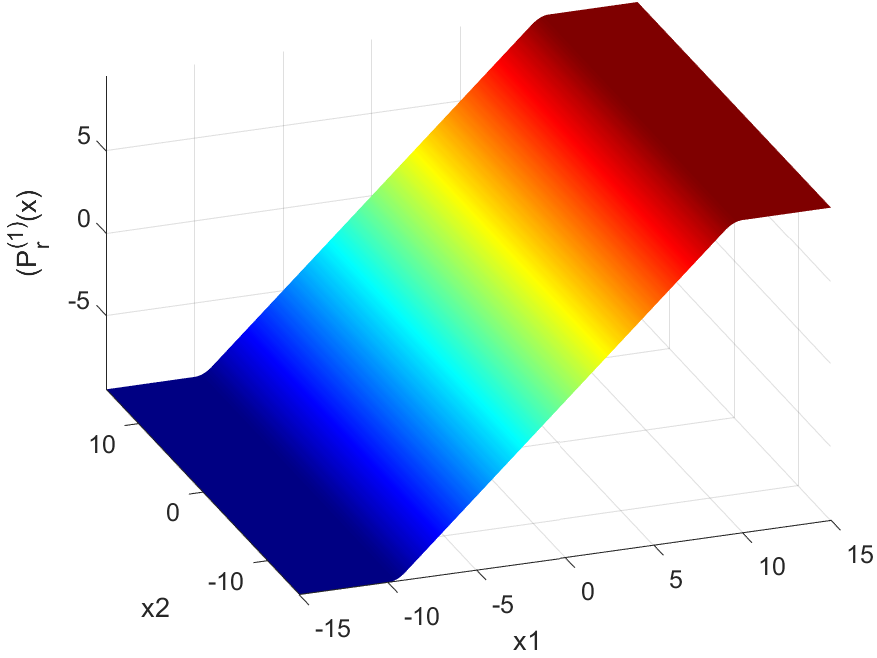}
        	\end{minipage}
        }\\
        \caption{A comparison of the first components of the two projection operators in $\mathbb{R}^2$ with $r=10$ for both operators and $\delta=0.1$ for our proposed operator.}
        \label{fig:pr}
        \vspace{-4mm}
        \end{figure}

         \begin{remark}
           A comparison of the two projection operators is given in Figure \ref{fig:pr}, where we illustrate the first component of the two-dimensional projection operator with $d=2,\ r=10,\  \delta = 0.1$. 
           It can be observed that, in the radial operator \eqref{eq:proj}, the function value of the operator depends on both components, whereas the component-wise operator \eqref{eq:ouyang} depends only on the first component. 
        \end{remark}
        
        \begin{lemma}
            \label{lem:pr}
             For any $\emptyset \neq \bm{v} \subseteq 1:d$ and $1\le i \le d$, the projection $P_{r}=(P_{r}^{(1)}(\x),\dots,P_{r}^{(d)}(\x))$   defined in \eqref{eq:proj} satisfies: 
            \begin{enumerate}[label=(\roman*)]
                \item  $\partial^{\bm{v}} P_{r}^{(i)}(\x)$ exists for any $\bm{x}\in\mathbb{R}^d$,  \label{pr1}
                \item There exists a constant $C_P(d,\delta)$, independent of $r$, such that $\left|\partial^{\bm{v}} P_{r}^{(i)}(\x)\right| \le C_P(d,\delta) \mathds{1}_{\{\|\x\| < r\}}$, \label{pr2}
                \item For any $\x \in \mathbb{R}^d$, $\|P_{r}(\x)\| \le \|\x\|$ and $\|P_{r}(\x)\| \le (1-\delta /2)r$. \label{pr4}
            \end{enumerate}
        \end{lemma}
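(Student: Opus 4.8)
The plan is to treat the three claims separately, using the fact that on each of the three open regions in the definition \eqref{eq:proj} the component $P_r^{(i)}$ is manifestly smooth: there it equals $x_i$, $(1-\psi(t(\x)))x_i$, or $0$, and $\|\x\|$ is $C^\infty$ away from the origin while $\|\x\|>(1-\delta)r>1$ throughout the transition shell. Hence all the work is concentrated on the two spheres $\|\x\|=(1-\delta)r$ (where $t(\x)=0$) and $\|\x\|=r$ (where $t(\x)=1$), together with obtaining uniform-in-$\x$ size bounds inside the shell.

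For \ref{pr1}, I would verify that the piecewise derivatives glue together up to order $d$ across the two spheres. Writing $P_r^{(i)}=x_i-\psi(t(\x))x_i$ in the shell and expanding $\partial^{\bm{v}}(\psi(t(\x))x_i)$ by the Leibniz rule as $\sum_{\bm{w}\subseteq\bm{v}}\partial^{\bm{w}}\psi(t(\x))\,\partial^{\bm{v}\setminus\bm{w}}x_i$, each summand carries a factor that is either $\psi(t(\x))$ (when $\bm{w}=\varnothing$) or $\partial^{\bm{w}}\psi(t(\x))$ with $\bm{w}\neq\varnothing$; by the Fa\`a di Bruno formula the latter is a finite combination of terms $\psi^{(m)}(t(\x))$ with $1\le m\le|\bm{w}|\le d$. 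Lemma \ref{lem:psi} gives $\psi(0)=0$ and $\psi^{(m)}(0)=0$ for $1\le m\le d$, so every summand tends to $0$ as $\|\x\|\downarrow(1-\delta)r$, matching the inner piece $\partial^{\bm{v}}x_i$; similarly $\psi(1)=1$ together with $\psi^{(m)}(1)=0$ forces $\partial^{\bm{v}}((1-\psi(t(\x)))x_i)\to0$ as $\|\x\|\uparrow r$, matching the outer piece $0$. A standard gluing argument then yields existence of $\partial^{\bm{v}}P_r^{(i)}$ everywhere. For \ref{pr2}, the same Leibniz expansion combined with Lemma \ref{lem:Dpsi} (giving $|\partial^{\bm{w}}\psi(t(\x))|\lesssim r^{-1}$ for $\bm{w}\neq\varnothing$), the bound $0\le\psi\le1$, and the elementary estimate $|x_i|\le\|\x\|<r$ in the shell shows each summand is $\mathcal{O}(1)$: a factor $r^{-1}$ coming from a nonempty-order derivative of $\psi$ is compensated by at most one factor $|x_i|\le r$. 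Outside the shell the derivative is either $\partial^{\bm{v}}x_i$ (bounded) or $0$, which yields the stated bound $|\partial^{\bm{v}}P_r^{(i)}(\x)|\lesssim\mathds{1}_{\{\|\x\|<r\}}$.

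For \ref{pr4}, the inequality $\|P_r(\x)\|\le\|\x\|$ is immediate, since $P_r(\x)$ equals $\x$, $\bm 0$, or the scalar multiple $(1-\psi(t(\x)))\x$ with $0\le1-\psi\le1$. The bound $\|P_r(\x)\|\le(1-\delta/2)r$ is trivial on the inner ball ($\|\x\|\le(1-\delta)r$) and on the outer region ($P_r=\bm 0$), so the crux is the shell, which I expect to be the main obstacle. There I would write $\rho:=\|\x\|=(1-\delta(1-t))r$ with $t=t(\x)\in(0,1)$, so that $\|P_r(\x)\|=(1-\psi(t))\rho=h(t)\,r$ where $h(t):=(1-\psi(t))(1-\delta(1-t))$, and reduce the claim to $h(t)\le1-\delta/2$ on $[0,1]$. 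I would resolve this by splitting at $t=1/2$: for $t\le1/2$ one has $1-\delta(1-t)\le1-\delta/2$ and $1-\psi(t)\le1$, while for $t>1/2$ the monotonicity of $\psi$ and the symmetry value $\psi(1/2)=1/2$ (the Beta$(d+1,d+1)$ law in \eqref{eq:psi} is symmetric) give $1-\psi(t)\le1/2\le1-\delta/2$. In either case $h(t)\le1-\delta/2$, completing \ref{pr4}.
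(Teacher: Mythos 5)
Your proposal is correct and follows essentially the same route as the paper's proof: existence via matching the derivatives of the radial factor at the two spheres using Lemma \ref{lem:psi}, the bound in the shell via the Leibniz expansion of $\kappa(\rho)x_i$ combined with Lemma \ref{lem:Dpsi} and $|x_i|\le\|\x\|<r$, and part \ref{pr4} by splitting at $t=1/2$ and using the symmetry of the Beta density to get $\psi(t)\ge 1/2$ for $t>1/2$. No substantive differences.
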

        
        \begin{proof}
        We prove the items in order.

        \emph{\ref{pr1}} In fact, the projection operator can be interpreted as a vector-valued function whose components are scaled by radial factors. Specifically, $P_{r}^{(i)}(\x) = \xi(\rho) x_i$ where $\rho = \|\x\|$ and
        $$
        \xi(\rho)=
        \begin{cases}
        1, & \rho\le (1-\delta )r,\\[4pt]
        \kappa(\rho), & (1-\delta )r <\rho<r,\\[4pt]
        0, & \rho\ge r,
        \end{cases} \quad \kappa(\rho) = 1- \psi\left(t(\x)\right) =1-\psi\left(\dfrac{\rho-(1-\delta )r}{\delta r}\right).
        $$
        It is straightforward to verify that the smoothness of the projection operator is completely determined by the smoothness of $\xi(\rho)$. By Lemma \ref{lem:psi}, we check that $\xi(\rho)$ and its derivatives up to order $d$ match continuously at $\rho = (1-\delta )r$ and $\rho=r$. Hence, for any $i = 1,2,\ldots,d$ and $\emptyset \neq \bm{v} \subseteq 1:d$ , $\partial^{\bm{v}} P_{r}^{(i)}(\x)$ exists for any $\bm{x}\in \mathbb{R}^d$.

        \emph{\ref{pr2}} We begin by considering the region $(1-\delta )r < \rho < r$. We first record an estimate for derivatives of $\kappa(\rho)$ with respect to $\bm x$. Clearly, $\abs{\partial^{\bm{v}}\kappa(\rho)} = \abs{\partial^{\bm{v}}\left(\psi(t(\bm x))\right)}$. By a special form of the Faa di Bruno formula \cite{basu2016}, 
        $$
\partial^{\bm{v}} \psi(t(\x))
=
\sum_{m=1}^{|\bm v|}
\psi^{(m)}(t(\x))
\sum_{\{\bm\ell_1,\ldots,\bm\ell_m\} \in \widetilde{\mathrm{KL}}(m,\bm{v})}
\prod_{j=1}^m
\partial^{\bm{\ell}_j}t(\x),
        $$
        where \begin{equation}\label{eq:KLs}
                \widetilde{\mathrm{KL}}(m,\bm{v}) = 
            \left\{\left(\bm{\ell}_1,\ldots,\bm{\ell}_m\right)\mid \bm{\ell}_j \subseteq \bm{v}, \cup_{j=1}^{m} \bm{\ell}_j=\bm{v},\bm{\ell}_j \cap \bm{\ell}_{j^{\prime}} =\emptyset \text { for } j \neq j^{\prime}
            \right\} .
            \end{equation}  
By the definition of $t(\x)$, for every nonempty $\bm \ell\subseteq 1:d$, we have
    \begin{equation}\label{Dx}
        |\partial^{\bm{\ell}}t(\x)| = \left|\partial^{\bm{\ell}} \left[\frac{\rho-(1-\delta )r}{\delta r}\right]\right| = \frac{|\partial^{\bm{\ell}}\rho|}{\delta r}  \le \frac{b_{|\bm \ell|}\rho^{1-|\bm \ell|}}{\delta r}. 
    \end{equation}
       where we use the fact that for $\rho >0$, $ |\partial^{\bm \ell} \rho| \le
    b_{|\bm \ell|}\rho^{1-|\bm \ell|}, b_i=(2i-3)!!,\ i\ge1.$
        Then, 
        $$
        \left|\prod_{j=1}^m
\partial^{\bm{\ell}_j}t(\x)\right| \le \prod_{j=1}^m\frac{b_{|\bm \ell_j|}\rho^{1-|\bm \ell_j|}}{\delta r} \le r^{-1}\prod_{j=1}^{m}(b_{|\bm \ell_j|}/\delta) \le (b_{d}/\delta)^dr^{-1},
        $$
        where we used the fact that $\rho > (1-\delta )r > 1$ and $r > 1$. By Lemma \ref{lem:psi}, we have
        \begin{align*}
            \abs{\partial^{\bm{v}}\kappa(\rho)} = \abs{\partial^{\bm{v}}\left(\psi(t(\bm x))\right)} \le   C_{\kappa}(d,\delta)r^{-1}
        \end{align*}
        for a constant $C_{\kappa}(d,\delta)>0$.
        For $|\bm v| >1$, since $P_{r}^{(i)}(\x) = \kappa(\rho)x_i$, we have 
            $$
            \begin{aligned}
            |\partial^{\bm{v}} P_{r}^{(i)}(\x) |&\le  \left| \partial^{\bm{v}}\kappa(\rho)x_i\right|+\mathds{1}_{\{i \in \bm{v}\}} \left|\partial^{\bm{v}-\{{i}\}}\kappa(\rho)\right|\\
            &\le C_{\kappa}(d,\delta)|x_i|/r+\mathds{1}_{\{i \in \bm{v}\}}C_{\kappa}(d,\delta)/r\le 2C_{\kappa}(d,\delta),
             \end{aligned}
            $$
            where we used the fact that $|x_i| \le \rho \le r$. Similarly, if $|\bm{v}| =1$,
            \[ |\partial^{\bm{v}} P_{r}^{(i)}(\x)| \le |\partial^{\bm{v}}\kappa(\rho)||x_i| + \mathds{1}_{\{i \in \bm{v}\}}|\kappa(\rho)| \red{\le C_{\kappa}(d,\delta)}\frac{|x_i|}{r} + \abs{1-\psi(t)} \red{\le C_{\kappa}(d,\delta)+1}, \]
            where we used the fact that $0\le \psi(t) \le 1$.
            For $\rho \le (1-\delta )r$ and $\rho \ge r$, it is clear that  
            $
            \left|\partial^{\bm{v}} P_{r}^{(i)}(\x)\right| \le \mathds{1}_{\{\rho \le (1-\delta )r\}}.
            $
            \red{Combining these results gives $|\partial^vP_r^{(i)}(x)|
        \le
        C_P(d,\delta)\mathbf 1_{\{\|x\|<r\}}$, where $C_P(d,\delta) = 2C_{\kappa}(d,\delta)+1$.}

            \emph{\ref{pr4}}             
            It is straightforward to check that for $\rho \le (1-\delta )r$ and $\rho \ge r$, $\|P_{r}(\x)\| \le (1-\delta /2)r$. We focus on $\rho \in ((1-\delta )r, r)$. Note that because $t:=t(\bm x) = \frac{\rho-(1-\delta )r}{\delta r}$, for $0\le t \le 1/2$,  
             $$\|P_{r}(\x)\| = (1-\psi(t))\rho \le \rho = \delta rt+(1-\delta )r \le (1-\delta /2)r.$$
             In addition, note that because $\zeta(s) = s^d(1-s)^d$ is symmetric about $s = 1/2$, we have 
             $$
             \int_{0}^{1/2}\zeta(s)ds = \int_{1/2}^1\zeta(s)ds = \frac{1}{2}\int_{0}^{1}\zeta(s)ds.
             $$
             Hence, for $1/2 < t \le 1$, 
             $$
             \psi(t) = \frac{\int_0^t s^d(1-s)^d ds}{\int_0^1 s^d(1-s)^d ds} > \frac{\int_0^{1/2} s^d(1-s)^d ds}{\int_0^1 s^d(1-s)^d ds} = \frac{1}{2}.
             $$
             Therefore, 
             $$\|P_{r}(\x)\| = (1-\psi(t))\rho < \rho/2 \le r/2 < (1-\delta /2)r,$$
             completing the proof.
        \end{proof}

        \subsection{\red{Main results and implications}} \label{subsec:main}
        \rev{We now state the main \(L_p\)-error result for plain RQMC integration in Theorem \ref{thm:rqmc}. This main theorem separates two regimes whose proofs are deferred to Appendix~\ref{subsec:lemma}. If \(M<0\), then the transformed integrand \(h(\bm u)=g\circ \mathcal{T}(\bm u)\) is already of BVHK, and no projection operator or tail condition is needed. If \(M\ge0\), the sub-Gaussian-type tail condition and the radial projection operator are used to control the projection and quadrature errors.}

        \begin{theorem}\label{thm:rqmc}
    Let \(p\ge1\). Suppose that \(g\) satisfies Assumption~\ref{ass:f} with a growth rate \(M\in\mathbb R\). Suppose that the inversion transformation \eqref{eq:tau1} is used in the RQMC quadrature with the RQMC point set satisfying Assumption~\ref{ass:rqmc}.\\
\(\mathrm{(i)}\) If \(M<0\), then 
\[
    \left\{
    \mathbb E_{\mathrm{RQMC}}
    \left|I_N(g)-q(g)\right|^p
    \right\}^{1/p}
    =
    \mathcal O\left(N^{-1}(\log N)^{d-1}\right).
\]
\(\mathrm{(ii)}\)  If $q\in \mathrm{SG}(\eta,\alpha)$ with constants \(\eta\in\mathbb Z\) and \(\alpha>0\), and \(0\le M<\alpha/p\), then for any \(\epsilon>0\),
\[
    \left\{
    \mathbb E_{\mathrm{RQMC}}
    \left|I_N(g)-q(g)\right|^p
    \right\}^{1/p}
    =
    \mathcal O\left(N^{-1+pM/\alpha+\epsilon}\right).
\] 
\end{theorem}

\rev{Theorem \ref{thm:rqmc} shows how the RQMC rate is determined by the competition between the growth rate \(M\) and the sub-Gaussian tail parameter \(\alpha\). If \(M<0\), the first part of Theorem~\ref{thm:rqmc} gives the nearly $\mathcal O\left(N^{-1}\right)$ rate without imposing a tail condition on the proposal $q$.
 If \(0 \le M<\alpha/p\), then the rate becomes nearly $\mathcal{O}\left(N^{-1+pM/\alpha}\right)$. In particular, the condition \(M<\alpha/(2p)\) guarantees a rate strictly faster than the Monte Carlo rate \(\mathcal{O}(N^{-1/2})\). Heavy-tailed proposals may cause the two integrands  \(\omega\) and
\(\omega f\) to have negative quadratic growth rates. Hence the regime \(M<0\) in
Theorem~\ref{thm:rqmc}, which does not require a tail condition on the proposal $q$, is particularly relevant to  the SNIS analysis in the first part of Theorem \ref{thm:SNIS}. In the second part of Theorem \ref{thm:SNIS}, the importance weight $\omega$ has a mild growth rate $M_{\omega}$ in the sense that $M_{\omega}>0$ can be arbitrarily small. We do not study the situation that $M_{\omega}>0$ cannot be arbitrarily small since the proposal $q$ would make the two integrands severe for QMC. Such a proposal $q$ is thus not a good candidate for QMC.}

\begin{lemma}\label{lem:mis}
    Assume that $g_1$ and $g_2$ satisfy Assumption \ref{ass:f} with growth rates $M_1$ and $M_2$, respectively. Then $g=g_1g_2$  satisfies Assumption~\ref{ass:f} with a growth rate $M_{g} = M_1+M_2$.
\end{lemma}

\begin{proof}
    By the Leibniz rule \cite{cons1996}, for any $\bm{a} \in \mathbb{N}_0^d$ satisfying $|\bm{a}| \le d$, we have
    \begin{shrinkeq}{-0ex}
        \[
    \begin{aligned}
    |D^{\boldsymbol{a}}g(\bm{x})|&= \left| \sum_{\bm{0} \leq \boldsymbol{b} \leq \boldsymbol{a}}\binom{\boldsymbol{a}}{\bm{b}} D^{\boldsymbol{b}} g_1(\x) D^{\boldsymbol{a-b}} g_2(\x) \right| \\
    &\le \sum_{\bm{0} \leq \boldsymbol{b} \leq \boldsymbol{a}}\binom{\boldsymbol{a}}{\boldsymbol{b}} C_1 \exp\left(M_{1}\|\x\|^2\right) C_2 \exp\left(M_2\|\x\|^2\right) \\
    &\red{= C_1 C_2 \left( \sum_{\bm{0} \leq \boldsymbol{b} \leq \boldsymbol{a}}\binom{\boldsymbol{a}}{\boldsymbol{b}} \right) }\exp\left((M_1+M_2)\|\x\|^2\right)\\
    &\le 2^dC_1 C_2 \exp\left((M_1+M_2)\|\x\|^2\right),
    \end{aligned}
    \]
    \end{shrinkeq}
    where $\bm{0} \le \bm{b}\le \bm{a}$ means $0 \le b_i \le a_i$ for $i = 1,2,\ldots,d$, and the fact $\sum_{\bm{0} \leq \boldsymbol{b} \leq \boldsymbol{a}}\binom{\boldsymbol{a}}{\boldsymbol{b}} = 2^{|\bm{a}|} \le 2^d$ has been used. Therefore, $g$ satisfies Assumption \ref{ass:f} with a growth rate $M_{g} = M_1+M_{2}$.
\end{proof}

\begin{theorem}\label{thm:SNIS}
Suppose that the inversion transformation \eqref{eq:tau1} is used in the RQMC-SNIS estimator \eqref{eq:snisest} with the RQMC point set satisfying Assumption~\ref{ass:rqmc}. Assume that $\omega$ and $f$ satisfy Assumption~\ref{ass:f} with growth rates \(M_{\omega}\in\mathbb{R}\) and \(M_{f}\in\mathbb{R}\), respectively.\\
\rev{
    \red{\(\mathrm{(i)}\) If \(\max\{M_{\omega},M_{\omega}+M_f\}<0\), then for any $p\ge 1$, 
    $$
    \left\{
    \E_{\mathrm{RQMC}}
    \left|\pi_{N}(f)-\pi(f)\right|^p
    \right\}^{1/p}
    =
    \mathcal O\left(N^{-1}(\log N)^{d-1}\right).
    $$}
\(\mathrm{(ii)}\) Suppose that there exists a constant $v> 1$ such that $q\left(|f|^v\right) = \int_{\mathbb{R}^d} |f(\bm{x})|^vq(\bm{x})d\bm{x}<\infty$. Let $1\le p <v$.  If \(M_{\omega}> 0\) is arbitrarily small, $q\in \mathrm{SG}(\eta,\alpha)$ with $\alpha>0$, and \(0\le M_f<\alpha/p\), then for any $\epsilon>0$,}
    \[
    \rev{\left\{\mathbb{E}_{\mathrm{RQMC}}\left|\pi_{N}(f)-\pi(f)\right|^{p}\right\}^{1/p} = \mathcal{O}(N^{-1+p M_f/\alpha+\epsilon}).}
    \]
\end{theorem}

\begin{proof}  
It is easy to see that $wf$ satisfies Assumption~\ref{ass:f} with a growth rate $M_{\omega f}=M_\omega+M_f$ by Lemma~\ref{lem:mis}.  Note that 
$$
        \begin{aligned}
        |\pi_{N}(f)-\pi(f)| &= \left|\frac{I_N(f \omega)}{I_N(\omega)}-\frac{q(\omega f)}{q(\omega)}\right|\\
        &\leq \frac{1}{|I_N(\omega)|}\left|I_N(f\omega)-q(f \omega)\right|+\left|\frac{q(f \omega)}{I_N(\omega)q(\omega)}\right|\left|I_N(\omega)-q(\omega)\right|.
        \end{aligned}
$$
\rev{If \(\max\{M_{\omega},M_{\omega}+M_f\}<0\), then by Lemma~\ref{lem:negative_bvhk}, $g_1(\bm u) = \omega\circ\mathcal{T}(\bm u)$ and $g_2(\bm u) = \omega f\circ\mathcal{T}(\bm u)$ are both of BVHK. On the probability-one event in Assumption~\ref{ass:rqmc}, the
Koksma--Hlawka inequality \eqref{eq:KH_inq} gives
\[
    |I_N(\omega)-q(\omega)|
    =
    \left|
    \frac1N\sum_{i=1}^N g_1(\bm{u}_i)
    -
    \int_{(0,1)^d}g_1(\bm{u})\,d\bm{u}
    \right|
    \le
    B_d V_{\mathrm{HK}}(g_1)\frac{(\log N)^{d-1}}{N}.
\]
Thus, there exists $N_0>0$ such that for any $N\ge N_0$, $|I_N(\omega)-q(\omega)|< 0.5q(\omega)$. Therefore, for every $N\ge N_0$, $\mathbb{P}_{\mathrm{RQMC}}(I_N(\omega)>0.5q(\omega))=1$. As a result, with a probability one, 
\begin{equation*}
|\pi_{N}(f)-\pi(f)|\le \frac{2}{q(\omega)}\left|I_N(f\omega)-q(f \omega)\right|+\frac{2|q(f \omega)|}{q(\omega)^2}\left|I_N(\omega)-q(\omega)\right|.
\end{equation*}
Taking the $p$-th power, applying $(a+b)^p \le 2^{p-1}(a^p+b^p)$ and $|q(f\omega)|=|\int_{\mathbb{R}^d}f(\bm x)\bar \pi(\bm x)d\bm x|<\infty$, and taking expectations on both sides yields
    \begin{equation}\label{eq:e0}
        \begin{aligned}
    \mathbb{E}\left[\left|\pi_{N}(f)-\pi(f)\right|^p \right]
    &\leq 2^{p-1} \left(\frac{2}{q(\omega)}\right)^p \mathbb{E}\left[\left|I_N(f \omega)-q(f \omega)\right|^p\right] \\
    &\quad + 2^{p-1} \left(\frac{2|q(f\omega)|}{q(\omega)^2}\right)^p \mathbb{E}\left[\left|I_N(\omega)-q(\omega)\right|^p\right].
        \end{aligned}
    \end{equation}
Applying the first statement in Theorem~\ref{thm:rqmc} with  $g=\omega f$ and $g=\omega$ gives the first claim in the theorem.}

We now turn to  the second claim \(\mathrm{(ii)}\)  in which $g_1$ and $g_2$ are no longer of BVHK. To bound the $L_p$-error, we follow the proof strategy of \cite[Theorem 2.3]{deligiannidis2024}. Define the event
    \[
    \mathcal{E} := \left\{\left|I_N(\omega)-q(\omega)\right|>0.5q(\omega)\right\}.
    \]
    By the law of total probability, the error can be decomposed as
    \begin{equation}\label{eq:decom}
    \mathbb{E}\left[\left|\pi_{N}(f)-\pi(f)\right|^p\right] = \mathbb{E}\left[\left|\pi_{N}(f)-\pi(f)\right|^p \mathds{1}_{\mathcal{E}}\right] + \mathbb{E}\left[\left|\pi_{N}(f)-\pi(f)\right|^p \mathds{1}_{\mathcal{E}^{c}}\right].
    \end{equation}
    For the first term, \rev{using $ |\pi_{N}(f)| = \left|\sum_{i=1}^{N}\frac{\omega\left(\bm{\bm{x}}_i\right) }{\sum_{j=1}^{N}\omega\left(\bm{\bm{x}}_j\right)} f\left(\bm{\bm{x}}_i\right)\right| \le \max_{1 \leq i \leq N} |f\left(\x_i\right)|$ and $(a-b)^p \le 2^{p-1}(|a|^p+|b|^p)$}, we have
    \begin{equation*}
                \begin{aligned}
            \mathbb{E}\left[\left|\pi_{N}(f)-\pi(f)\right|^p \mathds{1}_{\mathcal{E}}\right] \leq&~ \mathbb{E}\left[\left(\max _{1 \leq i \leq N} \red{|f\left(\x_i\right)|}+\red{|\pi(f)|}\right)^p \mathds{1}_{\mathcal{E}}\right] \\
            \leq&~ 2^{p-1} \mathbb{E}\left[\left(\max _{1 \leq i \leq N} \red{|f\left(\x_i\right)|}\right)^p \mathds{1}_{\mathcal{E}}\right] +2^{p-1}\red{|\pi(f)|}^p \mathbb{P}_{\mathrm{RQMC}}(\mathcal{E}).
            \end{aligned}
    \end{equation*}
    Since \(M_{\omega}>0\) can be taken arbitrarily small, using Markov's inequality with $1 \le s < \alpha/M_{\omega}$ and then applying Theorem~\ref{thm:rqmc} with $g=\omega$, for any $\epsilon\in (0,1)$, \red{there exists a constant $C_{\epsilon,s}> 0$, independent of \(N\), such that }
    \begin{align*}
        \mathbb{P}_{\mathrm{RQMC}}(\mathcal{E}) &= \mathbb{P}_{\mathrm{RQMC}}\left(\left|I_N(\omega)-q(\omega)\right|>0.5q(\omega)\right) \leq \frac{\mathbb{E}\left[\left|I_N(\omega)-q(\omega)\right|^s\right]}{(0.5q(\omega))^s} \\
        &\le \frac{C_{\epsilon,s}}{(0.5q(\omega))^s} N^{-(1-\epsilon)s}= C_{\mathcal{E}} N^{-(1-\epsilon)s},
    \end{align*}
    where $C_{\mathcal{E}}=C_{\epsilon,s}(0.5q(\omega))^{-s}$.
    Then, applying H\"{o}lder's inequality with exponents $v/p$ and $(1-p/v)^{-1}$, we obtain
    $$
        \begin{aligned}
        \mathbb{E}\left[\left(\max\limits_{1 \leq i \leq N} \red{|f\left(\x_i\right)|}\right)^p \mathds{1}_{\mathcal{E}}\right]
        \leq& ~\mathbb{E}\left[\left(\max\limits_{1 \leq i \leq N} |f\left(\x_i\right)|\right)^v\right]^{p/v} \times \left[\mathbb{P}_{\mathrm{RQMC}}(\mathcal{E})\right]^{1-p/v} \\
        \leq & \red{\left( N q(|f|^v) \right)^{p/v} \left( C_{\mathcal{E}} N^{-(1-\epsilon) s} \right)^{1-p/ v}} \\
        \leq &~ \red{C_{\mathcal{E}}^{1-p/ v}}q\left(|f|^v\right)^{p/v} N^{p/v-(1-\epsilon) s(1-p/ v)},
        \end{aligned}
        $$
        where we use the fact that     $$\mathbb{E}\left[\left(\max\limits_{1 \leq i \leq N}| f\left(\x_i\right)|\right)^v\right] \le \mathbb{E}\left[\sum_{i=1}^N \left|f\left(\x_i\right)\right|^v \right] = Nq(|f|^v).$$
        We now take a large enough $s$ to ensure
        \begin{equation}\label{eq:ps}
            p/v-(1-\epsilon)s(1-p/ v) \le -(1-\epsilon)p.
        \end{equation} 
        We thus have
    \begin{equation}\label{eq:e1}
        \mathbb{E}\left[\left|\pi_{N}(f)-\pi(f)\right|^p \mathds{1}_{\mathcal{E}}\right] \red{\le C_1} N^{-(1-\epsilon) p}
    \end{equation}
for $C_1 =2^{p-1}(C_{\mathcal{E}}^{1-p/ v}q\left(|f|^v\right)^{p/v}+|\pi(f)|^pC_{\mathcal{E}}) > 0$.

    \red{On the event $\mathcal{E}^c$, $I_N(\omega) > 0.5 q(\omega) > 0$. It follows that 
    \[
    |\pi_{N}(f)-\pi(f)|\mathds{1}_{\mathcal{E}^{c}} \le \frac{2}{q(\omega)}\left|I_N(f\omega)-q(f \omega)\right| + \frac{2|q(f\omega)|}{q(\omega)^2}\left|I_N(\omega)-q(\omega)\right|.
    \]
Then $\mathbb{E}[|\pi_{N}(f)-\pi(f)|^p\mathds{1}_{\mathcal{E}^{c}}]$ has an upper bound as in the right hand side of \eqref{eq:e0}.
Applying Theorem~\ref{thm:rqmc} with  $g=\omega f$ and $g=\omega$, and noticing again that $M_\omega>0$ is arbitrarily small, it follows that
\begin{equation}\label{eq:e2}
    \mathbb{E}[|\pi_{N}(f)-\pi(f)|^p\mathds{1}_{\mathcal{E}^{c}}] \le C_2 N^{-(\beta-\epsilon) p}+C_3 N^{-(1-\epsilon) p} \le (C_2+C_3)N^{-(\beta-\epsilon) p}
\end{equation}
for constants $C_2 > 0$, $C_3 > 0$, and $\beta = 1- p M_{f}/\alpha \le 1$.}
By \eqref{eq:decom} and \eqref{eq:e1} together with \eqref{eq:e2}, the result follows immediately.
        \end{proof}

        \begin{remark}\label{rem:liu_snis_growth}
An unbounded function that satisfies Assumption~\ref{ass:f} with an arbitrarily small growth rate $M>0$ is referred to as a ``QMC-friendly" function \cite{he2024}. If both functions $\omega$ and $f$ are QMC-friendly, then applying Theorem \ref{thm:SNIS} yields 
$
\set{\E\left|\pi_{N}(f)-\pi(f)\right|^{p}}^{1/p} = \mathcal{O}\left(N^{-1+\epsilon}\right)
$
for any $\epsilon>0$ and $1 \leq p < v$. Compared with the $L_p$-error rate $\mathcal{O}(N^{-1/2})$ in Theorem 2.3 of \cite{deligiannidis2024}, the preceding result yields a faster rate. Moreover, \cite[Theorem~2.3]{deligiannidis2024} requires $q(\omega^s)<\infty$ and $p \le \frac{sv}{s+v+2}$. For example, when \(p=2\) and \(s=3\), this condition requires \(v\ge10\). In contrast, in the QMC-friendly condition above, our result only requires \(v=2+\delta'\) for any \(\delta'>0\), and yields a nearly \(\mathcal{O}(N^{-1})\) rate.
\end{remark}

\section{\rev{Admissible transport maps as the proposals}}\label{subsec:apply}
    \rev{
Section~\ref{sec:QMC} focuses on proposals with independent marginals, which is limited in practice. For example, if the proposal is a general Gaussian distribution, Theorem \ref{thm:SNIS}
cannot be applied directly. Note that Gaussian distributions can be generated via a standard Gaussian through an affine transformation. The standard Gaussian can thus serve as a base distribution, and then a general Gaussian proposal is a pushforward measure of the base distribution. Generally, suppose that $\bm Y$ has density $q_0$, where $q_0$ is a (simple) base distribution density. For a diffeomorphic mapping $\tau$, we take 
\begin{equation}\label{eq:NFdist}
    \bm X=\tau(\bm Y)\sim q(\bm x)=q_0(\tau^{-1}(\bm x))|\det J_\tau(\tau^{-1}(\bm x))|^{-1}
\end{equation}
as the pushforward measure of $q_0$ by $\tau$, where $J_\tau(\cdot)$ denotes the Jacobian matrix of $\tau$. The mapping $\tau:\mathbb R^d\to\mathbb R^d$ is called the transport map. This section extends the results in Section~\ref{sec:QMC}  to more flexible proposals via specific transport maps. Due to the simplicity of the base distribution, we follow the setting  in Section~\ref{sec:QMC} to assume that  $q_0(\bm x)$ has independent marginals, which can be generated via the inversion method as in \eqref{eq:tau1}. Despite this, the resulting proposal $q(\bm x)$ can be  expressive for cleverly designed transport maps $\tau$, such as normalizing flows \cite{liu2024transport}. The problem can then be written as
$$
        \pi(f) = \frac{\mathbb{E}_{q}[\omega(\bm X)f(\bm X)]}{\mathbb{E}_{q}[\omega(\bm X)]} = \frac{\mathbb{E}_{q_0}[\omega\circ\tau (\bm Y)f\circ\tau (\bm Y)]}{\mathbb{E}_{q_0}[\omega\circ\tau(\bm Y)]}= \frac{q_0(\omega_\tau f_\tau)}{q_0(\omega_\tau)},
$$
where \(\omega=\bar\pi/q\), \(f_\tau(\bm y)=f(\tau(\bm y))\), and
\[
\omega_\tau(\bm y)=\omega(\tau(\bm y))
=\frac{\bar\pi(\tau(\bm y))}{q(\tau(\bm y))}
=\frac{\bar\pi(\tau(\bm y))|\det J_\tau(\bm y)|}{q_0(\bm y)}.
\]
This formulation reduces the analysis to the setting of Section \ref{sec:QMC} with the base proposal \(q_0\), applied to the transformed integrands \(\omega_\tau f_\tau\) and \(\omega_\tau\). Therefore, the \(L_p\)-error analysis in Theorem \ref{thm:SNIS} is applicable provided that \(q_0\) satisfies the required tail condition and the transformed integrands satisfy the growth and moment conditions required there. In the negative-growth case, no tail condition on \(q_0\) is needed; in the nonnegative-growth case, the sub-Gaussian-type tail condition is imposed on \(Y\sim q_0\). We next provide some sufficient conditions to examine the growth rates of \(\omega_\tau f_\tau\) and \(\omega_\tau\). 
}

\begin{theorem}\label{thm:tau}
Assume that for any $\bm{a} \in \mathbb{N}_0^d$ with $1\le |\bm{a}| \leq d$, there exist constants \(B_\tau,M_\tau\ge0\) such that
\begin{equation}\label{eq:dtau}
    |D^{\bm{a}}\tau_i(\x)| \le B_\tau \exp(M_\tau\|\x\|^2), \quad \forall  \x\in\mathbb R^d
\end{equation}
for \(i=1,\ldots,d\), where $\tau_i$ denotes the $i$-th component of $\tau$.
Suppose that  the function \(g(\x)\) defined over $\mathbb{R}^d$  satisfies Assumption~\ref{ass:f} with a growth rate \(M_g\in \mathbb{R}\) and a constant $C_g>0$. Assume that there exist constants $C_\tau>0,C_{\tau}^{\prime}\in \mathbb{R}$ such that $\|\tau(\x)\|^2 \le C_\tau\|\x\|^2+C_{\tau}^{\prime}$ if $M_g\ge 0$ and otherwise $\|\tau(\x)\|^2 \ge C_\tau\|\x\|^2+C_{\tau}^{\prime}$. Then $g_\tau=g\circ\tau$ satisfies  Assumption~\ref{ass:f} with a growth rate \(M_{g_\tau}=M_g C_\tau+dM_\tau\). 
\end{theorem}
\begin{proof}
 We only prove the case of $M_g\ge 0$ and the case of $M_g<0$ can be proved similarly.
 By Assumption \ref{ass:f} for $g$ and the upper bound on $\tau$, we have
\[
 |g(\tau(x))|
 \le C_g \exp\{M_g\|\tau(x)\|^2\}
 \le C_g\exp\{M_g C_\tau^{\prime}\}
        \exp\{(M_g C_\tau + dM_\tau)\|x\|^2\}.
\]
Let $\tau_{i}$ be the $i$-th component of $\tau$. By the Faa di Bruno formula \cite{cons1996}, for any $\x \in \mathbb{R}^d$ and $\bm{a} \in \mathbb{N}_0^d$ with $1\le |\bm{a}|\le d$, $D^{\bm{a}} g_\tau(\x)$ is a sum of finite terms of the form 
            \begin{equation}\label{eq:faa}
                 D^{\boldsymbol{\lambda}} g(\tau(\x)) \prod_{j=1}^{s} \prod_{i=1}^d (D^{\bm{\ell}_j} \tau_{i}(\x))^{\bm{k}_{j,i}},
            \end{equation}
        where $\boldsymbol{\lambda} \in \mathbb{N}_0^d, 1 \leq|\boldsymbol{\lambda}| \leq |\bm{a}|$, $1 \le s \le |\bm{a}|$, $\bm{k}_{j}\in \mathbb{N}_0^d\setminus \{{\bm{0}}\}$ satisfying $\sum_{j=1}^{s} \bm{k}_{j} = \boldsymbol{\lambda}$, and $\bm{\ell}_{j}\in \mathbb{N}_0^d\setminus \{{\bm{0}}\}$ such that $\sum_{j=1}^{{s}} |\bm{k}_{j}| \bm{\ell}_{j}=\boldsymbol{a}$. 
Since all $\tau_i(\bm x)$ satisfy \eqref{eq:dtau}, it follows that
    \[
    \begin{aligned}
    \prod_{j=1}^{s} \prod_{i=1}^d \left|D^{\bm{\ell}_j} \tau_{i}(\x)\right|^{\bm{k}_{j,i}} 
    &\red{\le \prod_{j=1}^{s} \prod_{i=1}^d \left( B_\tau \exp(M_\tau\|\x\|^2) \right)^{\bm{k}_{j,i}}} \\
    &\red{= B_\tau^{|\boldsymbol{\lambda}|} \exp(M_\tau|\boldsymbol{\lambda}|\|\x\|^2) \le \max(1, B_\tau^d) \exp(dM_\tau\|\x\|^2),}
    \end{aligned}
    \]
where $B_\tau>0$ is a constant. 
Since $g$ satisfies Assumption~\ref{ass:f} with a growth rate \(M_g\ge 0\) and a constant $C_g>0$, and $\|\tau(\x)\|^2 \le C_\tau\|\x\|^2+C_{\tau}^{\prime}$, we have $$|D^{\boldsymbol{\lambda}} g(\tau(\x))| \le C_g \exp(M_g\|\tau(\x)\|^2)=C_g\exp(M_gC_\tau\|\x\|^2+M_gC_\tau^{\prime}).$$
Hence, we obtain
    \red{
    \[
    \left|D^{\boldsymbol{\lambda}} g(\tau(\x)) \prod_{j=1}^{s} \prod_{i=1}^d (D^{\bm{\ell}_j} \tau_{i}(\x))^{\bm{k}_{j,i}}\right| \le C \exp\left((M_gC_{\tau}+dM_\tau)\|\x\|^2\right),
    \]
    where $C = C_g \max(1, B_\tau^d) \exp(M_gC_{\tau}^{\prime})$. The number of terms in the Faa di Bruno expansion depends only on \(d\). Therefore, after summing all terms, \(g_\tau\) satisfies Assumption~\ref{ass:f} with the growth rate $ M_{g_\tau}$, completing the proof.} 
        \end{proof}

Now, we consider a specific form of the transformation  $\tau(\x) = T(L\x+\bm{\mu})$, where $T(\x) = \left(T_1(x_1),T_2(x_2),\ldots,T_d(x_d)\right)$ is a component-wise transformation with each $T_i$ being a strictly monotone function, $L \in \mathbb{R}^{d\times d}$ is a nonsingular lower-triangular matrix (scaled transformation), and $\bm{\mu} \in \mathbb{R}^d$ (shifted transformation). Let $\lambda_{\max}(\cdot)$ and $\lambda_{\min}(\cdot)$ be the largest and the smallest eigenvalues of a matrix, respectively.

 \begin{proposition}\label{pro:T}
If there exist constants $M_T,B_T\ge0$ such that for all $1\le m \le d$ and $1\le i\le d$,
\begin{align*}
|D^m T_i(y)| \le B_{T}\exp(M_T|y|^2), \quad \forall y\in\mathbb{R},
\end{align*}
then the transport map $\tau(\bm x)=T(L\x+\bm{\mu})$ satisfies \eqref{eq:dtau} with growth rate \(M_\tau=(1+\varepsilon)M_T\lambda_{\max}(L^{\mathrm{T}}L)\) for any arbitrarily small $\varepsilon>0$.
If $|T_i(y)|^2 \le C_T|y|^2+C_T^{\prime}$ for some constants $C_T>0$ and $C_T^{\prime}\in\mathbb{R}$, then $\|\tau(\x)\|^2 \le C_\tau\|\x\|^2+C_{\tau}^{\prime}$ for $C_\tau =(1+\varepsilon)C_T\lambda_{\max}(L^{\mathrm{T}}L)$ and a constant $C_{\tau}^{\prime}$.
If $|T_i(y)|^2 \ge c_T|y|^2+c_T^{\prime}$ for some constants $c_T>0$ and $c_T^{\prime}\in\mathbb{R}$, then $\|\tau(\x)\|^2 \ge c_\tau\|\x\|^2+c_{\tau}^{\prime}$ for $c_\tau = (1-\varepsilon)c_T\lambda_{\min}(L^{\mathrm T}L)$ and a constant $c_{\tau}^{\prime}$.
\end{proposition}
        
\begin{proof}
            Let $L_i$ be the $i$-th row of $L$. Then, $\tau_i(\x) = T_i(L_i\x+\mu_i)$. By the Faa di Bruno formula \cite{cons1996}, for any $\x \in \mathbb{R}^d$ and $\bm{a} \in \mathbb{N}_0^d$ satisfying $1\le |\bm{a}|\le d$, we have
            \begin{shrinkeq}{-1ex}
                \begin{align*}
            D^{\bm{a}}\tau_i(\x)  = D^{|\bm{a}|}T_i(y_i)\prod_{j=1}^d (L_{i,j})^{a_j},
            \end{align*}
            \end{shrinkeq}
            where $ y_i = L_i\x+\mu_i$. Since $|D^mT_i(y)| \le B_T\exp(M_T|y|^2)$ for any $1\le m\le d$, by Young's inequality, it follows that for any $\varepsilon > 0$,
            $$
            \begin{aligned}
                &|D^{\bm{a}}\tau_i(\x)|\red{\le \max_{1\le i\le d} \left| \prod_{j=1}^d (L_{i,j})^{a_j} \right| B_T\exp(M_T|L_i\x+\mu_i|^2)}\\ &\red{\le \max_{\substack{1\le i\le d\\1\le |\bm{a}|\le d}} \left| \prod_{j=1}^d (L_{i,j})^{a_j} \right| B_T\exp\left(M_T\|\mu\|_{\infty}^2(1+1/\varepsilon)\right)\exp(M_T\lambda_{\max}(L^{\mathrm{T}}L)(1+\varepsilon)\|\x\|^2)},
            \end{aligned}
            $$
            where  we use the fact that $M_T \ge 0$ and
            $$
            (L_i\x)^2 = \x^{\T}L_i^{\T}L_i\x \le \lambda_{\max}(L_i^{\T}L_i)\|\x\|^2 \le \lambda_{\max}(L^{\mathrm{T}}L)\|\x\|^2.
            $$ 
            In addition, if $|T_i(y)|^2 \le C_T|y|^2+C_T^{\prime}$,
            \red{
            \[ 
            \begin{aligned} 
            \|\tau(\x)\|^2 = \sum_{i=1}^d |T_i(y_i)|^2 \le C_T\sum_{i=1}^d |y_i|^2+dC_T'= C_T\|L\x+\bm\mu\|^2+dC_T'. 
            \end{aligned} \] 
            Using Young's inequality, $\|L\x+\bm\mu\|^2 \le (1+\varepsilon)\|L\x\|^2 + \left(1+1/\varepsilon\right)\|\bm\mu\|^2$,
           we obtain 
            \[ \|\tau(\x)\|^2 \le (1+\varepsilon)C_T\lambda_{\max}(L^{\mathrm{T}}L)\|\x\|^2 + C_T\left(1+1/\varepsilon\right)\|\bm\mu\|^2 + dC_T'. \]
            Similarly, we can prove the claim for the case $|T_i(y)|^2 \ge c_T|y|^2+c_T^{\prime}$.}
        \end{proof}

        \begin{remark}\label{rem:linear}
            If $T(\x) = \x$, i.e., $\tau(\x) = L\x+\bm{\mu}$, then \(C_T=c_T = 1\), \(C_T'=c_T' = 0\), and \(M_T=0\). If \(f\) satisfies Assumption~\ref{ass:f} with \(M_f\in \mathbb{R}\), by Theorem~\ref{thm:tau} and Proposition~\ref{pro:T}, $f_\tau=f\circ\tau$ has a growth rate $M_{f_\tau}=M_fC_\tau$ with 
            \begin{equation}\label{eq:linear}
                 C_\tau=
            \begin{cases}
                (1+\varepsilon)\lambda_{\max}(L^{\mathrm{T}}L),&\quad M_f\ge 0\\
                (1-\varepsilon)\lambda_{\min}(L^{\mathrm{T}}L),&\quad M_f< 0
            \end{cases}
            \end{equation}
            for arbitrarily small $\varepsilon>0$.
            The growth rate $M_{\omega_\tau}$ for  $\omega_\tau=\omega\circ\tau$ can be obtained similarly. Clearly, a negative-growth rate $M_\omega<0$ for $\omega$ implies a negative-growth rate  $M_{\omega_\tau}<0$ for $\omega_\tau$. Also, if $M_\omega>0$ is arbitrarily small, we end up with an arbitrarily small
            $M_{\omega_\tau}>0$ for $\omega_\tau$. Thus location--scale transformations preserve the regimes of $\omega$ in Theorem~\ref{thm:SNIS}. RQMC-SNIS achieves an $L_p$-error rate of nearly $\mathcal{O}(N^{-1+p M_f\lambda_{\max}(L^{\mathrm{T}}L)/\alpha})$ if $M_f\ge 0$ and $M_\omega>0$ is arbitrarily small.
        \end{remark}

        In modern Bayesian computation, the target distribution $\pi$ may be multimodal or skewed or spiky. 
        It usually calls for an expressive proposal $q$ to fit such a complicated target distribution. We next consider the composed maps $\tau = \tau^{K}\circ\cdots\circ\tau^{2}\circ\tau^{1}$ as the transport map studied by Liu~\cite{liu2024transport}, where $K\ge 1$ is the number of layers, $\tau^{k}(\x) = T^{k}(L^{k}\x+\bm{\mu}^{k})$, $T^{k}$ is a component-wise strictly monotone transformation with $T^{k}(\x) = (T_1^{k}(x_1),T_2^{k}(x_2),\ldots,T_d^{k}(x_d))$, $L^{k} \in \mathbb{R}^{d\times d}$ is a lower-triangular matrix and $\bm{\mu}^{k} \in \mathbb{R}^d$, $k =1,2,\ldots,K$. 
         
          \begin{proposition}\label{pro:tauk}
              Suppose that there exist common constants \(c_T,C_T,M_T,B_T\) such that all component maps \(T_i^k\), \(1\le i\le d\), \(1\le k\le K\), satisfy the assumptions of Proposition~\ref{pro:T} with these constants and $\tau = \tau^{K}\circ\cdots\circ\tau^{2}\circ\tau^{1}$ for $K\ge 1$ with $\tau^{k}(\x) = T^{k}(L^{k}\x+\bm{\mu}^{k})$. Define 
              $$
              M_L := \max\limits_{1\le k\le K} \lambda_{\max}\bigl((L^k)^{\mathrm T}L^k\bigr), \quad m_L := \min\limits_{1\le k\le K} \lambda_{\min}\bigl((L^k)^{\mathrm T}L^k\bigr)>0.
              $$
              Then the conclusions in Proposition~\ref{pro:T} hold with 
            $$c_\tau = c_*^{(K)} := \left((1-\varepsilon)c_Tm_L\right)^K, \quad C_\tau = C_*^{(K)} = \left((1+\varepsilon)C_TM_L\right)^K,
              $$
              and the  growth rate $M_\tau = M_\tau^{(K)}$ where \(M_\tau^{(0)}=0\) and, for \(k=1,\ldots,K\),
              $$
              M_\tau^{(k)} = (1+\varepsilon)M_TM_LC_{*}^{(k-1)}+dM_\tau^{(k-1)}. 
              $$ 
          \end{proposition}
          \begin{proof}
              Similar to the proof of Theorem 4.5 of \cite{liu2024transport}, by induction on $K$, we establish the result. 
          \end{proof}

\red{
As suggested by Proposition~\ref{pro:tauk}, to gain a small growth rate $M_\tau$ for the transport map $\tau$, one may choose $T^{k}_i$ with bounded derivatives to achieve $M_T=0$. By doing so, $M_\tau=0$ and the composite function $f_\tau = f\circ\tau$ has the growth rate $M_{f_\tau}=M_fC_\tau=M_f\left((1+\varepsilon)C_TM_L\right)^K$ for $M_f>0$. If $M_f>0$ cannot be arbitrarily small, $f_\tau$ has a severe growth rate, especially for a large number of layers $K$. 
Constructing an optimal transport map \(\tau\) lies beyond the scope of this work; interested readers are referred to Liu \cite{liu2024transport} for relevant discussions.
}

    \section{Numerical experiments}\label{sec:experiments}
In this section, we investigate a toy Bayesian inverse problem 
and a Bayesian logistic regression problem 
to examine the effect of the parameters in Theorem \ref{thm:SNIS} for RQMC-SNIS, driven by classical scrambled Sobol' point sets with direction numbers from \cite{joe2003}, which satisfy Assumption \ref{ass:rqmc}.
\rev{We perform $R$ independent randomizations of the underlying QMC point set to obtain $R$ replicated estimator values, denoted by $\pi_N^{(1)}(f),\ldots,\pi_N^{(R)}(f)$.}
The empirical $L_p$-error is computed as
\begin{equation}\label{eq:var}
	\widehat{L_p}= \left[\frac{1}{R} \sum_{i=1}^R \left|\pi^{(i)}_N(f) - \hat\pi(f)\right|^p\right]^{1/p},
\end{equation}
where $\hat\pi(f) = \frac{1}{R} \sum_{i=1}^R\pi^{(i)}_N(f)$.
In the following experiments, we fix $R = 50$, $f(\x) = \|\x\|^2$ which satisfies Assumption \ref{ass:f} with arbitrarily small $M_f>0$, and we consider a linear transport map $\tau(\bm{x}) = \bm{\mu}+L\bm x$ with $\bm{\mu} \in \mathbb{R}^d$ and $L \in \mathbb{R}^{d \times d}$ satisfying $L^{\mathrm{T}}L = \bm{\Sigma}$ for a positive definite matrix $\bm{\Sigma}$.

Consider a Bayesian inverse problem with the forward model $\bm{y} = \mathcal{G}(\z)+\bm \varsigma$, where  $\bm{y}\in \mathbb{R}^d$ is the observed data,  $\bm{z} \in \mathbb{R}^d$ is the unknown parameter,  $\mathcal{G}$ is the response operator, and $\bm \varsigma\in \mathbb{R}^d$ is the noise vector. Assume that $\bm \varsigma \sim \mathcal{N}(\bm 0,(1/n)I_d)$, where $n$ is the level of noise. The likelihood $l(\z)$ is proportional to $\exp \left\{-n \Psi(\z)\right\}$, where $\Psi(\z) = \frac{1}{2}\|\bm{y}-\mathcal{G}(\z)\|^2$. Let $\pi_0(\z)$ be the prior distribution. By Bayes' formula, the posterior distribution is given by
$\pi(\z) \propto \bar\pi(\z):=\exp (-n \Psi(\z)) \pi_0(\z)$
with an intractable constant.

Our aim is to estimate  the posterior expectation of interest given by
$$
\pi(f):=\mathbb{E}_\pi[f(\bm{Z})]=\frac{\int_{\mathbb{R}^d} f(\z) \exp (-n \Psi(\z)) \pi_0(\z) d\z}{\int_{\mathbb{R}^d} \exp (-n \Psi(\z)) \pi_0(\z) d\z}.
$$
Following \cite{he2024}, we consider a Gaussian prior $\pi_0=N\left(\bm{\mu_0}, \bm{\Sigma}_0\right)$ and a linear mapping with a small nonlinear perturbation as the response operator to analyze the effect of the model parameters. Specifically, we take $\mathcal{G}(\z)=\z+ \lambda \mathcal{F}(\z)$ with $\lambda>0$ and $\mathcal{F}(\z)=\left(z_1 e^{-z_1^2}, \ldots, z_d e^{-z_d^2}\right)^{\mathrm{T}}$. For simplicity, we take $\bm{y}=\bm{0}$, yielding
$$
\Psi(\z)=\frac{1}{2}\|\mathcal{G}(\z)\|^2=\frac{1}{2} \sum_{i=1}^d z_i^2\left(1+\lambda e^{-z_i^2}\right)^2.
$$
Note that $\Psi(\z) \geq \frac{1}{2}\|\z\|^2$. By Example 4.9 of \cite{he2024}, $\Psi(\z)$ satisfies Assumption \ref{ass:f} with arbitrarily small $M_\Psi>0$. Hence, the likelihood $l(\z)$ satisfies Assumption \ref{ass:f} with $M_l = -\frac{n}{2}+M_\Psi<0$. For the Gaussian prior $N\left(\bm{\mu_0}, \bm{\Sigma}_0\right)$, we set $\bm{\mu_0}=\bm{1}$ and $\bm{\Sigma}_0 = I_d$. Therefore, the unnormalized posterior density $\bar\pi(\z)$ has a growth rate $M_{\bar\pi} =  -(n+1)/2+\varepsilon_0<0$, where $\varepsilon_0>0$ is arbitrarily small.

Note that $\bm{\mu}^*=\mathbf{0}$ is the global minimizer of $\Psi(\z)$, and $\bm{\Sigma}^*=\left(\nabla^2 \Psi\left(\bm{\mu}^*\right)\right)^{-1}=\kappa I_d$ with $\kappa=(1+\lambda)^{-2}$. In our experiments, we vary the model parameter $\kappa$, the dimension $d$, and the moment parameter $p$ to show the performance of various RQMC-SNIS estimators. We consider five proposals as follows: 
\begin{itemize}
    \item PriorIS: Gaussian proposal, $\bm{\mu}=\bm{\mu_0}=\mathbf{1}$ and $\bm{\Sigma} = \bm{\Sigma}_0 = I_d$;
    \item ODIS: Gaussian proposal,  $\bm{\mu}=\bm{\mu}^*=\mathbf{0}$ and $\bm{\Sigma} = \bm{\Sigma}_0 = I_d$;
    \item LapIS: Gaussian proposal, $\bm{\mu}=\bm{\mu}^*=\mathbf{0}$ and $\bm{\Sigma}=\bm{\Sigma}^*/n=(\kappa/n) I_d$;
    \item tIS0: Linear t proposal, $\bm{\mu}=\mathbf{1}$, $\bm{\Sigma}= I_d$ and the degrees of freedom $\nu = 5$;
    \item tIS: Linear t proposal, $\bm{\mu}=\mathbf{0}$, $\bm{\Sigma}= (\kappa/n) I_d$ and the degrees of freedom $\nu = 5$.
\end{itemize}
For simplicity, throughout the experiments, we fix $n = 20$.
Regarding the impact of different noise levels \(n\), we refer the reader to \cite{he2024}.

\rev{Since $M_f > 0$ is arbitrary small, the function $f_\tau=f\circ\tau$ has an arbitrary small growth rate $M_{f_\tau}>0$; see Remark~\ref{rem:linear} for a discussion. For a proposal density $q$, we write $M_{1/q}$ for the growth rate of $1/q$. Applying Lemma~\ref{lem:mis} with Gaussian proposals, we then have $$M_{\omega} = M_{1/q} +M_{\bar\pi}=\frac 12 \lambda_{\max}\bigl(\bm{\Sigma}^{-1}\bigr)+\varepsilon_1-\frac{n+1}2+\varepsilon_0 = \frac 12 \lambda_{\max}\bigl(\bm{\Sigma}^{-1}\bigr)-\frac{n+1}2+\varepsilon_2.$$
where $\varepsilon_1>0$ is arbitrarily small and $\varepsilon_2 = \varepsilon_0+\varepsilon_1$.
For LapIS, the growth rate of $\omega$ is $$M_{\omega}=\frac{n}{2\kappa}\left(1-\frac{n+1}{n}\kappa\right)+\varepsilon_2.$$
If $\kappa > \frac{n}{n+1}$, then $M_{\omega} < 0$ by taking a small enough $\epsilon_0>0$. 
As discussed in Remark~\ref{rem:linear}, $\omega_\tau = \omega\circ\tau$ then has a negative growth rate $M_{\omega_\tau}<0$. Since \(M_{f_\tau}>0\) can be chosen
arbitrarily small, we further ensure that $M_{\omega_\tau}+M_{f_\tau} <0$. Applying the first claim in Theorem~\ref{thm:SNIS}, 
RQMC-SNIS attains an $L_p$-error rate of nearly $\mathcal{O}(N^{-1})$ for all $p\ge 1$. If $\kappa = \frac{n}{n+1}$, $M_\omega>0$ can be arbitrarily small and so does $M_{\omega_\tau}$. Since the base Gaussian proposal
satisfies the required sub-Gaussian tail condition by Remark \ref{rem:gauss}, and \(f_\tau\) has finite moments of all orders in this experiment, applying the second claim in Theorem~\ref{thm:SNIS}, 
RQMC-SNIS also attains an $L_p$-error rate of nearly $\mathcal{O}(N^{-1})$ for all $p\ge 1$. 
However, if $\kappa < \frac{n}{n+1}$, $M_{\omega_\tau}>0$ cannot be arbitrarily small, resulting in a severe growth rate for $\omega_\tau$. This is unfavorable for RQMC.

For PriorIS and ODIS with Gaussian proposals, we have a negative growth rate $M_\omega<0$ for $\omega$ by noticing $\lambda_{\max}\bigl(\bm{\Sigma}^{-1}\bigr)=1$. For t proposals, the growth rate $M_{1/q}>0$ can be arbitrarily small, resulting in $M_{\omega} = M_{1/q} +M_{\bar\pi}<0$. Applying the first claim in Theorem~\ref{thm:SNIS} again, 
RQMC-SNIS attains an $L_p$-error rate of nearly $\mathcal{O}(N^{-1})$ for PriorIS, ODIS, tIS0 and tIS.}

We first consider the case $d=5$, $\kappa \in \{1/4,3/4,1\}$ and $p \in \{1,2,5\}$. Since the parameter choices for PriorIS, ODIS, and tIS0 are independent of $\kappa$, for $\kappa \in \{3/4,1\}$, we focus only on LapIS and tIS.  Figure \ref{fig:linearG5} shows $\log_2 \widehat{L_p}$ given by \eqref{eq:var} versus $\log_2 N$. 
For PriorIS, ODIS, and tIS0, the observed error decay in the tested range is closer to the MC rate, which is consistent with their small effective sample sizes shown in Figure \ref{linearG2}. In contrast, tIS maintains stable performance across different values of \(p\), and its observed rate is close to the predicted \(\mathcal{O}(N^{-1+\epsilon})\) behavior. LapIS improves as \(\kappa\) increases, but its performance is more sensitive to \(p\) when \(\kappa\) is small.

\begin{figure}[h]
\vspace{-4mm}
\centering
\subfigure[$\kappa = 1/4$.]{\label{linearG1}
        \centering
    \includegraphics[width=0.73\linewidth]{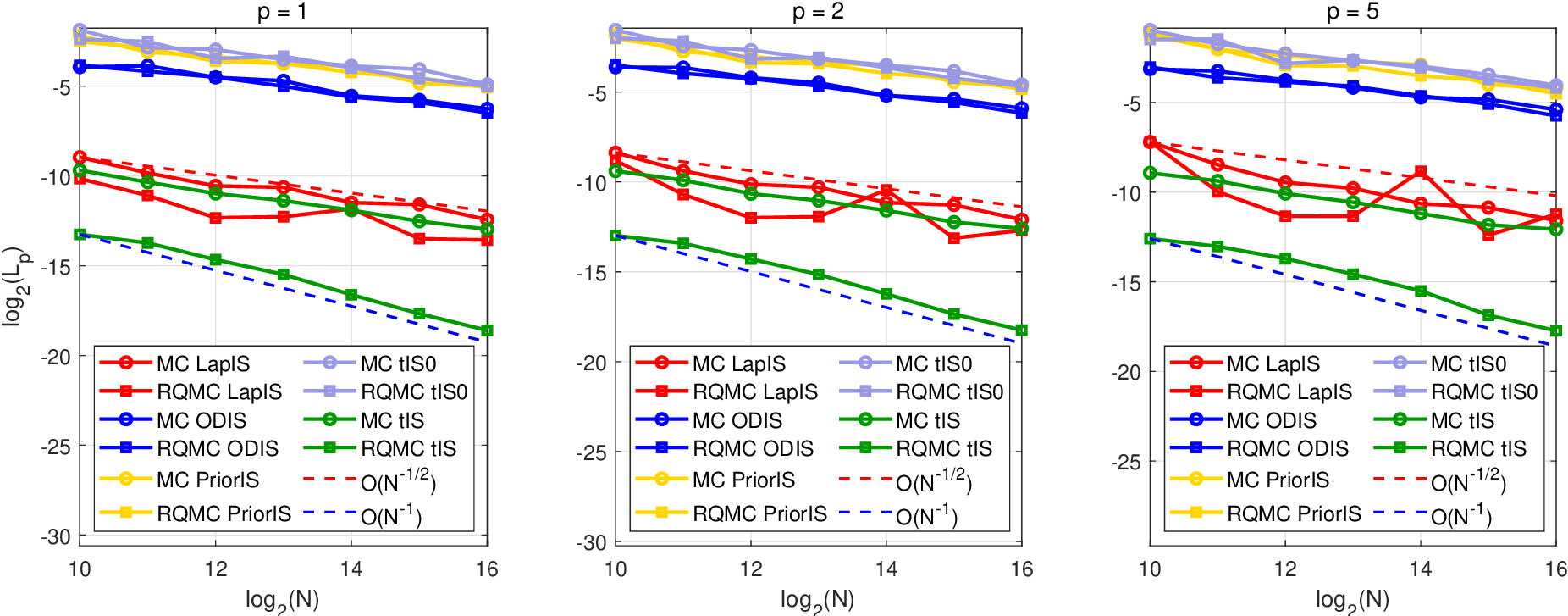}
}
\vspace{-3mm}
\subfigure[Effective sample sizes with $\kappa = 1/4$. ]{\label{linearG2}
        \centering
        \includegraphics[width=0.19\linewidth]{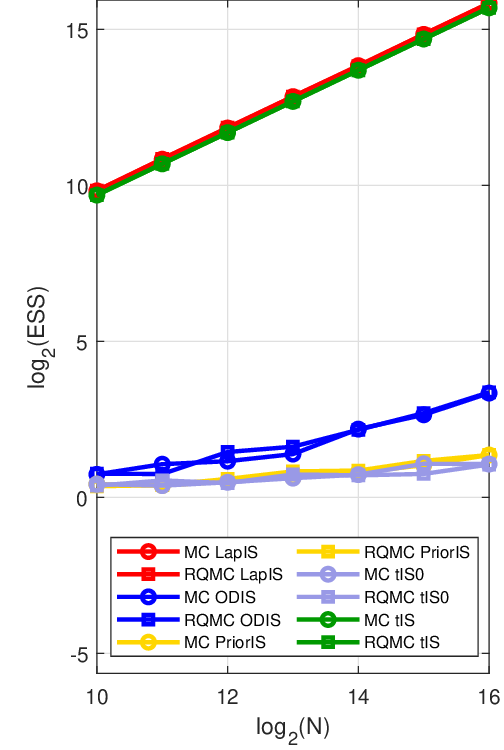}
}
\\
\subfigure[$\kappa = 3/4$.]{\label{linearG3}
        \centering
    \includegraphics[width=0.47\linewidth]{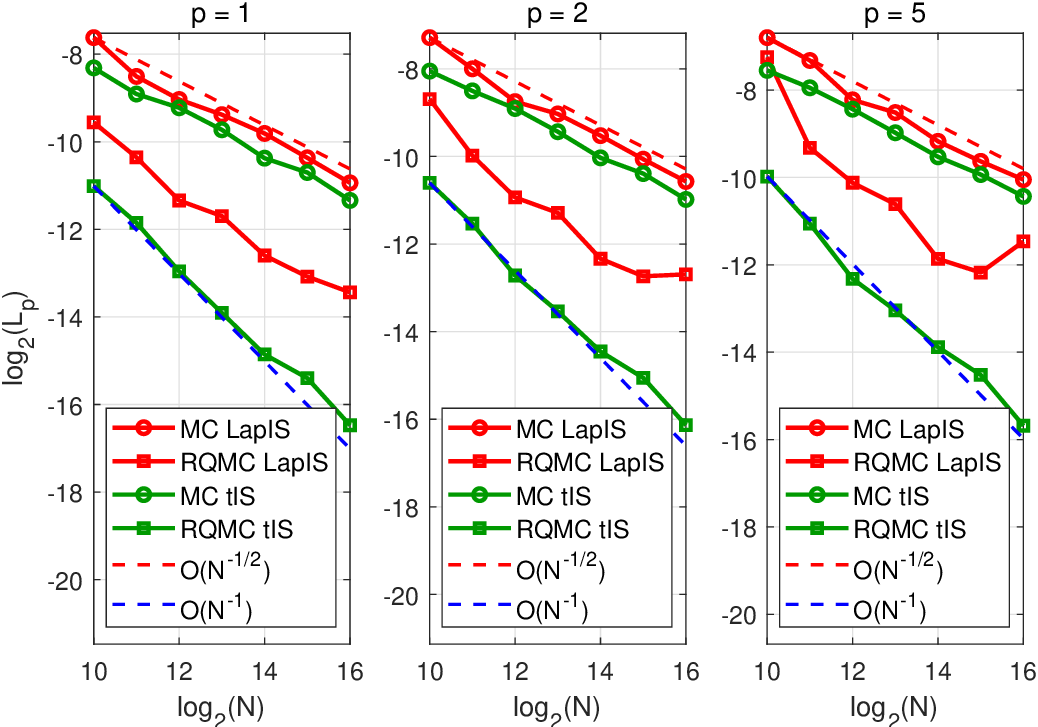}
}
\vspace{-3mm}
\subfigure[$\kappa = 1$.]{\label{linearG4}
        \centering
        \includegraphics[width=0.47\linewidth]{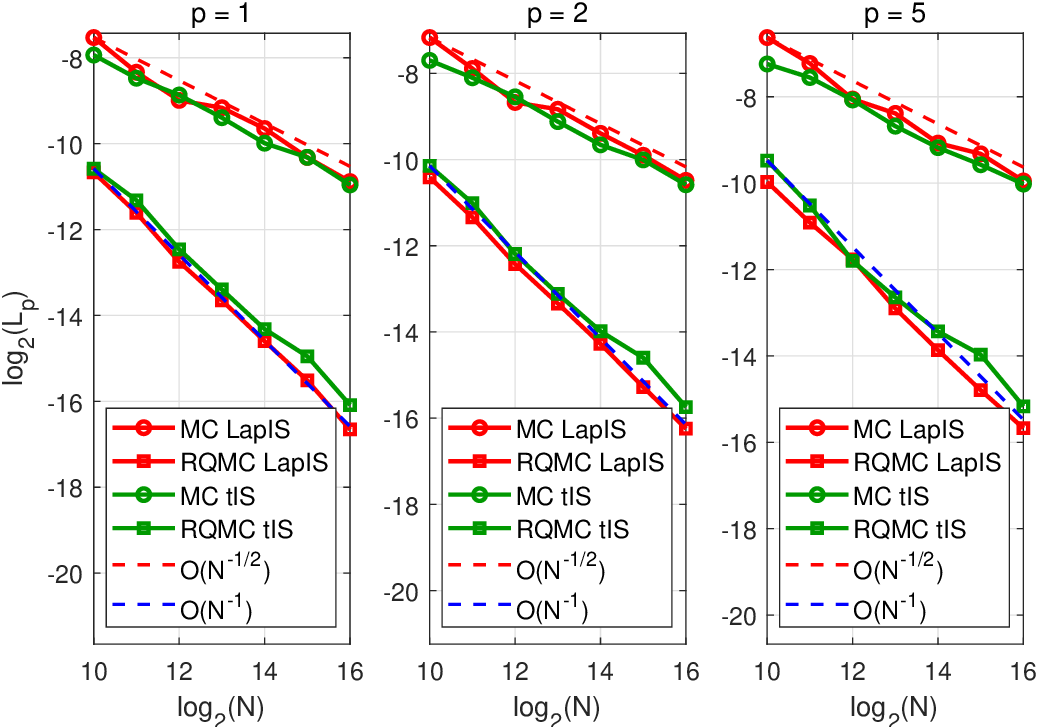}
}
\caption{The $L_p$-error for different proposals with different $\kappa$ and $d = 5$.}
\label{fig:linearG5}
\vspace{-3mm}
\end{figure}

Finally, we consider the case $d = 30$ to assess the effect of dimensionality in Figure~\ref{fig:linearG30}, focusing on $\kappa = 1$. As the dimension increases, the convergence rates of both LapIS and tIS inevitably deteriorate due to the logarithmic factor in the rate. While this factor can be absorbed into $N^{\epsilon}$ for sufficiently large $N$, it still has a noticeable impact when $N$ is relatively small. Developing an RQMC–SNIS algorithm whose performance is dimension-independent remains an open problem. Recently, Pan et al. \cite{pan2025quasi} proposed a boundary-damping importance sampling approach for QMC integration, focusing on component-wise target distributions. Their method can achieve an RMSE rate of order $\mathcal{O}(N^{-1+\epsilon})$ that is independent of the dimension. This work provides a promising direction for addressing the dimensionality challenge in RQMC–SNIS.
\begin{figure}[htbp]
\vspace{-2mm}
\centering
        \centering
        \includegraphics[width=0.6\linewidth]{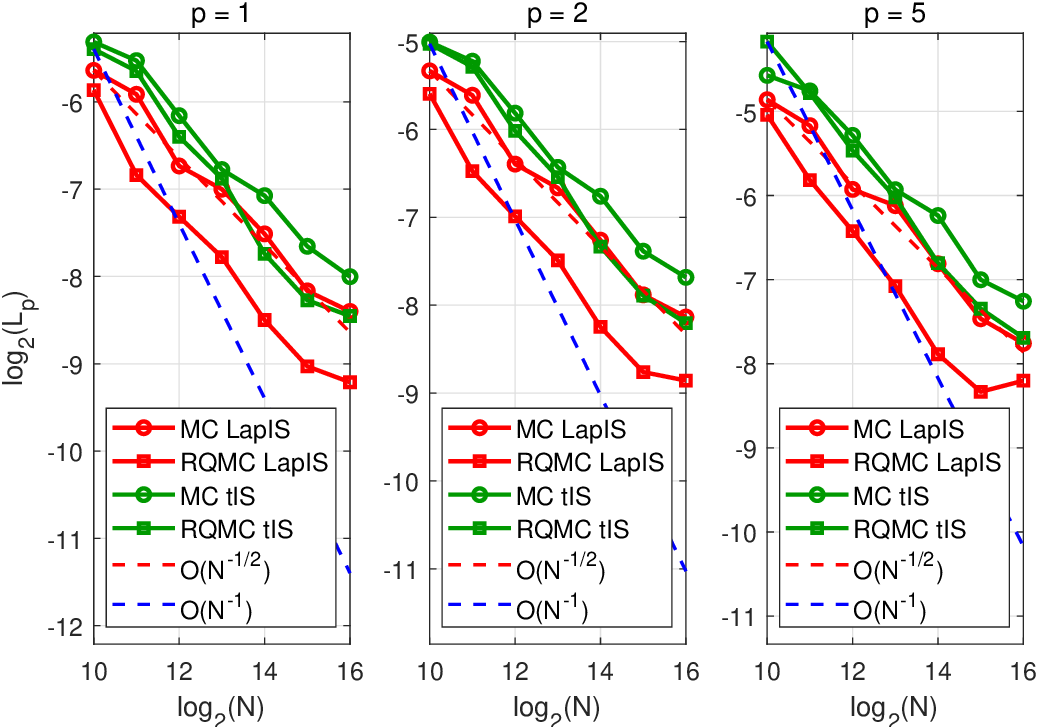}
\caption{The $L_p$-error for different proposals with $\kappa = 1$ and $d = 30$.}
\label{fig:linearG30}
\vspace{-6mm}
\end{figure}

\subsection{Bayesian logistic regression}
We consider a Bayesian logistic regression model following \cite{he2023}. Let $\bm{X} \in \mathbb{R}^{m\times d}$ denote the observation matrix of predictor variables with $\bm{X}_i\in \mathbb{R}^{1\times d}$ the $i$-th row, $\bm{\beta} =(\beta_1,\beta_2,\ldots,\beta_d)^{T}\in\mathbb{R}^{d\times 1}$ denote the regression parameters, and \( \bm{Y} = (Y_1,Y_2,\ldots,Y_m) \) denote the observation with $Y_i\in \{0,1\}$. The logistic model is expressed as $$\PP{Y_i = 1} =1-\PP{Y_i =0}= \frac {\exp(\bm{X}_i\bm{\beta})}{1+\exp(\bm{X}_i\bm{\beta})},\ i=1,2,\ldots,m.$$ The log-likelihood function for \( \bm{Y} \) is given by
$$
\log l(\bm{\beta}) = \bm{\beta}^T \bm{X}^T \bm{Y}-\sum_{i=1}^m \log \left(1+\exp \left( \bm{X}_i\bm{\beta}\right)\right).
$$
We take a standard Gaussian prior $\bm{\beta} \sim \mathcal{N}(\bm{0}, \bm{I}_d)$. We consider the Pima Indian dataset with $m = 392, d=9$ \cite{windle2013}. Following \cite{he2023}, we compare two proposal distributions:
\begin{itemize}
    \item LapIS: A Gaussian proposal with mean $ \boldsymbol{\mu}_{\star}$ and covariance $\bm{\Sigma}_{\star} $, where $\boldsymbol{\mu}_{\star}$ solves
$$\boldsymbol{\mu}_{\star} = \sum_{i=1}^m\left(Y_i-\frac{\exp \left(\X_i \boldsymbol{\mu}_{\star}\right)}{1+\exp \left(\X_i \boldsymbol{\mu}_{\star}\right)}\right) \X_i^T, $$
and the covariance $\bm{\Sigma}_{\star} $
is given by
$$
\boldsymbol{\Sigma}_{\star}=\left(I_d-\nabla^2 F\left(\boldsymbol{\mu}_{\star}\right)\right)^{-1},$$
where $F(\bm \beta)=\log l(\bm{\beta})$ and
$$
\nabla^2 F\left(\boldsymbol{\mu}_{\star}\right)_{j k}=-\sum_{i=1}^m \frac{\X_{i j} \X_{i k} \exp \left(\X_i \boldsymbol{\mu}_{\star}\right)}{\left(1+\exp \left(\X_i \boldsymbol{\mu}_{\star}\right)\right)^2} .
$$
    \item Linear t proposal, $\bm{\mu} = \boldsymbol{\mu}_{\star},\bm{\Sigma} = \bm{\Sigma}_{\star}$. 
\end{itemize}

By Faa di bruno formula \cite{cons1996}, we can check that the likelihood function $l(\bm{\beta})$ possesses an arbitrarily small growth rate $M_l>0$. Hence, $M_{\bar \pi} = -1/2+M_l$. Then, using the fact that $\lambda_{\max}(\bm{\Sigma}_{\star}^{-1})>1$ by Theorem 5.3 of \cite{he2023}, we have $M_{\omega}=M_{1/q}+M_{\bar \pi}>\frac 12 \lambda_{\max}(\bm{\Sigma}_{\star}^{-1})+ M_{\bar \pi}> 0$. This is an unfavorable case for achieving a fast convergence rate of RQMC.
By contrast, the tIS proposal achieves an $L_p$-error rate of nearly $\mathcal{O}(N^{-1})$. Figure \ref{fig:log} illustrates the estimate of $L_p$-error for $p = \{1,2,5\}$. It can be observed that LapIS performs worse than tIS, whereas the convergence rates of tIS remain consistently close to $\mathcal{O}(N^{-1})$ across different values of $p$.
\begin{figure}[htbp]
        \vspace{-4mm}
        \centering
        \includegraphics[width=0.6\linewidth]{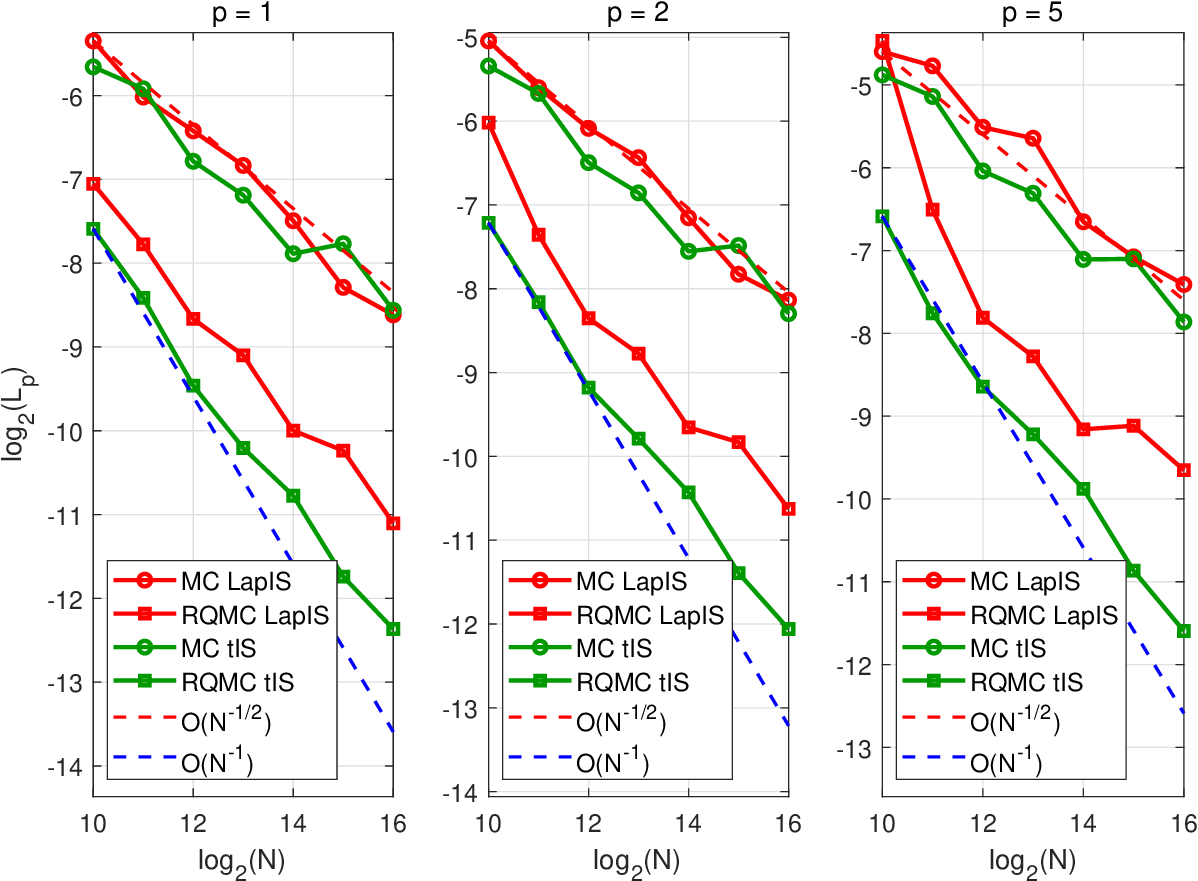}
        \caption{The $L_p$-error for different proposals with Pima dataset.}
        \label{fig:log}
        \vspace{-4mm}
        \end{figure}

    \section{Conclusions}\label{sec:conclusions}
In this work, we establish the $L_p$-error rate $(p \geq 1)$ of an RQMC-SNIS estimator in the setting of unbounded integrands on $\mathbb{R}^d$ under the
stated assumptions on the target, proposal, integrand, and transport map. We validate our theoretical results through a simple Bayesian inverse problem, examining the impact of various parameters, and find that the experimental outcomes align well with the theory. In addition, we conduct experiments on Bayesian logistic regression, where the results suggest that the linear $t$ proposals constructed using $\bm{\mu}$ and $\bm{\Sigma}$ determined by the LapIS method appear to provide superior performance. Our results fill an important theoretical gap in the foundation of RQMC-SNIS and broaden its applicability to practical Bayesian computation. 

As a by-product, we obtain \(L_p\)-error bounds for plain RQMC integration under critical quadratic growth and admissible transport maps. These bounds are the key technical input for the SNIS analysis and may also be useful in other higher-order error analyses; related applications are discussed in the Appendix \ref{appen:Lp}.More applications of $L_p$-error for RQMC see Appendix \ref{appen:Lp}.

However, in the experiments, we observed that SNIS estimators constructed directly with low-discrepancy point sets still perform poorly in high-dimensional problems. Developing an RQMC-SNIS algorithm that is robust to dimensionality remains an important direction for future research.

\section{\rev{Auxiliary lemmas and technical proofs}}\label{subsec:lemma}
         \rev{This appendix provides the proof of Theorem \ref{thm:rqmc} and several auxiliary lemmas. We first consider the case of $M<0$. Define \(h:[0,1]^d\to\mathbb R\) by $h = g \circ \mathcal{T}$. If $\partial^{1:d}h$ does not exist on $\partial[0,1]^d$, the existing bound  \eqref{eq:hk} from \cite{owen2005hk} cannot be applied. Below, we generalize it by removing the requirement on the existence of $\partial^{1:d}h$ on the surface $\partial[0,1]^d$. The $L_p$-error bound then follows from the Koksma--Hlawka inequality.}

\rev{\begin{lemma}\label{lem:negative_bvhk}
Assume that \(g\) satisfies Assumption~\ref{ass:f} with \(M<0\) and $C_g>0$. Let $h = g \circ \mathcal{T}$,
then 
\begin{shrinkeq}{-1ex}
    \begin{equation}\label{eq:negative_hk}
    V_{\mathrm{HK}}(h)
    \le 
    C_g\left(\uppi/(-M)\right)^{d/2}.
\end{equation}
\end{shrinkeq}
If the RQMC point set satisfies Assumption~\ref{ass:rqmc} with a constant $B_d>0$, then for every \(p\ge1\),
\begin{equation}\label{eq:negative_error}
    \left\{
    \mathbb E_{\mathrm{RQMC}}
    |I_N(g)-q(g)|^p
    \right\}^{1/p}
    \le B_dC_g\left(\uppi/(-M)\right)^{d/2}N^{-1}(\log N)^{d-1}.
\end{equation}
\end{lemma}

\begin{proof}
We first verify the boundary behavior of \(h\). Let
\(\bm v_n\in(0,1)^d\) satisfy $\bm v_n\rightarrow\bm v_\ast
    \in\partial[0,1]^d.$ At least one coordinate of \(\bm v_\ast\) is equal to \(0\) or \(1\). Since \(M<0\), Assumption~\ref{ass:f} gives
\[
\begin{aligned}
    |h(\bm v_n)|
    =
    |g(\mathcal{T}(\bm v_n))|
    \le
    C_g
    \exp\left\{
        M\|\mathcal{T}(\bm v_n)\|^2
    \right\}
    \rightarrow0.
\end{aligned}
\]
Thus \(h\) is continuous on \([0,1]^d\). For \(\bm u\in(0,1)^d\), write $\bm x=\mathcal{T}(\bm u).$ Since \(\mathcal{T}\) acts component-wise, we have
\begin{shrinkeq}{-1ex}
    \[
    \partial^{1:d}h(\bm u)
    =
    \partial^{1:d}g(\bm x)
    \prod_{j=1}^d \frac 1{q_j(x_j)}.
\]
\end{shrinkeq}
Using the change of variables \(x_j=\mathcal{T}_j(u_j)\), we obtain
\begin{equation}\label{eq:negative_vitali}
    \begin{aligned}
    \int_{(0,1)^d}
    |\partial^{1:d}h(\bm u)|\,d\bm u
    &=
    \int_{\mathbb R^d}
    |\partial^{1:d}g(\bm x)|\,d\bm x\\
    &\le
    C_g
    \int_{\mathbb R^d}
    \exp\{M\|\bm x\|^2\}\,d\bm x=
    C_g
    \left(\uppi/(-M)\right)^{d/2}
    <\infty.
\end{aligned}
\end{equation}
Thus $\partial^{1:d}h$ is absolutely integrable in $(0,1)^d.$

We now bound the Vitali variation of \(h\), which is defined in \eqref{eq:vit}. Let $\mathcal Y
    =
    \mathcal Y^1\times\cdots\times\mathcal Y^m$
be an arbitrary multidimensional ladder on \([0,1]^d\). For
\(\bm y\in\mathcal Y\), let $C_{\bm y}=[\bm y,\bm y_+]$ be the corresponding ladder cell. For sufficiently small \(\varepsilon>0\), define $a^{\varepsilon}_j
    =
    \max\{y_j,\varepsilon\}$, $ b^{\varepsilon}_j
    =
    \min\{y_{+,j},1-\varepsilon\}$, and set $C_{\bm y}^{\varepsilon}
    =
    [\bm a^{\varepsilon},
     \bm b^{\varepsilon}].$ 
Clearly, $C_{\bm y}^{\varepsilon}\subset(0,1)^d.$
Since $\partial^{1:d}h$ exists
on \(C_{\bm y}^{\varepsilon}\), Equation~(15) of
Owen~\cite{owen2005hk} gives
\[
    \Delta
    \bigl(
        h;
        \bm a^{\varepsilon},
        \bm b^{\varepsilon}
    \bigr)
    =
    \int_{C_{\bm y}^{\varepsilon}}
    \partial^{1:d}h(\bm u)\,d\bm u.
\]
As \(\varepsilon\downarrow0\), $\bm a^{\varepsilon}\to\bm y$ and $\bm b^{\varepsilon}\to\bm y_+$. 
Since \(h\in C([0,1]^d)\), $\Delta
    \bigl(
        h;
        \bm a^{\varepsilon},
        \bm b^{\varepsilon}
    \bigr)
    \to
    \Delta(h;\bm y,\bm y_+).$
Moreover, $\mathds{1}_{C_{\bm y}^{\varepsilon}}(\bm u)
    \partial^{1:d}h(\bm u)
    \to
    \mathds{1}_{C_{\bm y}}(\bm u)
    \partial^{1:d}h(\bm u)$ for almost every \(\bm u\in(0,1)^d\), and $\left|
    \mathds{1}_{C_{\bm y}^{\varepsilon}}(\bm u)
    \partial^{1:d}h(\bm u)
    \right|
    \le
    \left|
    \partial^{1:d}h(\bm u)
    \right|$. Since \(\partial^{1:d}h\in L^1((0,1)^d)\), the dominated
convergence theorem yields
\[
    \Delta(h;\bm y,\bm y_+)
    =
    \int_{C_{\bm y}}
    \partial^{1:d}h(\bm u)\,d\bm u.
\]
Therefore,
\[
\begin{aligned}
V_{\mathrm{Vit}}(h) &= \sup_{\mathcal{Y}}\sum_{\bm y\in\mathcal Y}
    \left|
        \Delta(h;\bm y,\bm y_+)
    \right| \le \sup_{\mathcal{Y}}\sum_{\bm y\in\mathcal Y}\int_{C_{\bm y}}
    |\partial^{1:d}h(\bm u)|\,d\bm u \\
    &\le
    \int_{[0,1]^d}
    |\partial^{1:d}h(\bm u)|\,d\bm u
    \le
    C_g
    \left(\frac{\uppi}{-M}\right)^{d/2}.
\end{aligned}
\]
where we use the fact that the ladder cells cover \([0,1]^d\) and
have mutually disjoint interiors. 

Now let \(\emptyset\ne\bm v\subsetneq1:d\). Since
\(\bar{\bm v}\ne\emptyset\), the point
\(\bm u_{\bm v}:\bm1_{\bar{\bm v}}\) belongs to
\(\partial[0,1]^d\). Hence $h_{\bm v}(\bm u_{\bm v}) = h(\bm u_{\bm v}:\bm1_{\bar{\bm v}})
    =0$
for every \(\bm u_{\bm v}\in[0,1]^{|\bm v|}\), i.e., $h_{\bm v}\equiv0$. Thus $
V_{\mathrm{Vit}}(h_{\bm v})=0
$. For \(\bm v=1:d\), \(h_{\bm v} = h\). By the definition of the Hardy--Krause variation \eqref{eq:hkvar}, 
$$
V_{\mathrm{HK}}(h)
    =
    \sum_{\emptyset\ne\bm v\subseteq1:d}
    V_{\mathrm{Vit}}(h_{\bm v}) = V_{\mathrm{Vit}}(h).
$$
Combining this identity with \eqref{eq:negative_vitali} proves
\eqref{eq:negative_hk}. Finally, by the Koksma--Hlawka inequality \eqref{eq:KH_inq} with Assumption \ref{ass:rqmc}, we obtain the result \eqref{eq:negative_error}. This completes the proof.
\end{proof}
}

        \red{Next we derive the projection-error estimates needed for the radial projection.}
        
    \begin{lemma}\label{lem:err1}
      Given $p \geq 1$, assume that $\bm X\sim q\in \mathrm{SG}(\eta,\alpha)$ and that $g$ satisfies Assumption \ref{ass:f} with $0 \le M < \alpha/p$ and a constant $C_g > 0$. If \(A=\alpha/p-M>0\), then there exists a constant $r_0$ such that for any $r>r_0$,
    \begin{align}\label{error2}
    \E_{q}\left|g\circ P_{r}(\X) - g(\X)\right|^{p} \red{\le C_{\mathrm{tail}}}r^{\eta}\exp\left(-pA(1-\delta)^2r^2\right),
    \end{align}
    where $C_{\mathrm{tail}}$ is a constant independent of \(r\).
    \end{lemma}
    \begin{proof}
    \red{By the definition of \(P_r\) in \eqref{eq:proj}, we have \(P_r(\x)=\x\) whenever
\(\|\x\|\le (1-\delta)r\). Therefore, } 
    \[
    \begin{aligned}
        \mathbb{E}_{q}\left|g \circ P_{r}(\X)-g(\X)\right|^p &\le \mathbb{E}_{q}\left[\left(\left|g(\X)\right|+\left|g\left(P_{r}(\X)\right)\right|\right)^p\mathds{1}_{\{\|\X\|>(1-\delta)r\}}\right] \\
        & \red{\le (C_g)^p2^p} \mathbb{E}_{q}\left[\exp\left(pM\|\X\|^2\right)\mathds{1}_{\{\|\X\|>(1-\delta)r\}}\right],
    \end{aligned}
    \]
    where Assumption~\ref{ass:f} with \(\bm a=\bm0\) and \(\|P_r(\bm x)\|\le\|\bm x\|\) from Lemma~\ref{lem:pr} were used. \red{It is clear that the upper bound is increasing with respect to $M$. It suffices to examine the case $M>0$ in the following.}
Since $q \in \mathrm{SG}(\eta,\alpha)$ as stated in Definition~\ref{ass:x}, there exist constants $t_0>0$  and $C_0>0$ such that for any $t\ge t_0$,
\begin{equation}\label{eq:sg_tail}
    \mathbb P_{q}(\|\bm X\|>t)
    \le
    C_0t^\eta
    \exp\{-\alpha t^2\}.
\end{equation}
Writing $Y := \exp\left(pM\|\X\|^2\right)$ and $y_0 := \exp\left(pM(1-\delta)^2r^2\right)$, we have
    $$
    \begin{aligned}
\E_{q}\bra{\exp\left(pM\|\X\|^2\right)\mathds{1}_{\{\|\X\|>(1-\delta)r\}}}
        &= \int_{0}^{\infty} \PP{}{Y\mathds{1}_{\{Y>y_0\}} > y}dy\\
        &=~   y_0\PP{}{Y > y_0} + \int_{y_0}^{\infty} \PP{}{Y > y}dy.
    \end{aligned}
    $$
Using \eqref{eq:sg_tail}, for any $r> r_1:=t_0(1-\delta)^{-1}$, we have
    $$
    \begin{aligned}
        y_0\PP{}{Y > y_0} &= \exp\left(pM(1-\delta)^2r^2\right)\PP{q}{\|\X\| > (1-\delta)r}\\
        &\le \red{C_0(1-\delta)^{\eta}}r^{\eta}\exp\left(-(\alpha-pM)(1-\delta)^2r^2\right).
    \end{aligned}
    $$
    By Lemma \ref{lem:exp}, for any $r> r_2:=(2\alpha-2Mp)^{-1/2}(1-\delta)^{-1}$, we also have
    $$
    \begin{aligned}
        \int_{y_0}^{\infty} \PP{}{Y > y}dy &= \int_{y_0}^{\infty} \PP{q}{\|\X\| > \seq{\frac{\log y}{pM}}^{1/2}}dy\\
        &\red{\le C_0}\int_{y_0}^{\infty} \seq{\frac{\log y}{pM}}^{\eta/2}\eexp{-\alpha \seq{\frac{\log y}{pM}}} dy\\
        & \red{\le 2C_{0}(pM)^{-\eta/2}} \int_{\sqrt{pM}(1-\delta)r}^{\infty} z^{\eta+1}\eexp{-\frac{\alpha-pM}{pM}z^2} dz \\
        & \red{\le 2C_{0}(1-\delta)^{\eta}(2a)^{-1}C_{\eta+1}}r^{\eta}\exp\left(-(\alpha-pM)(1-\delta)^2r^2\right),
    \end{aligned}
    $$
    where $C_s = \max\{(s+2)!!/2,1\}$  and $a=\alpha/(pM)-1$.
    Therefore,
    $$
    \mathbb{E}_{q}\left|g \circ P_{r}(\X)-g(\X)\right|^p \red{\le C_{\mathrm{tail}}}r^{\eta}\exp\left(-pA(1-\delta)^2r^2\right)
    $$
    \red{for $C_{\mathrm{tail}} = (C_g)^p2^pC_{0}(1-\delta)^{\eta}(1+C_{\eta+1}/a)$.}
    Taking $r_0=\max(r_1,r_2)$ completes the proof.
    \end{proof}

    We now bound the quadrature error for the radially projected integrand.

\begin{lemma}\label{lem:err2}
Assume that $g$ satisfies Assumption 2 with $M\ge 0$ and a constant $C_g>0$. Let $h(\bm u)=(g\circ P_r)\circ \mathcal{T}(\bm u).$ Assume that $\left\{\bm{u}_{1}, \ldots, \bm{u}_{N}\right\}$ is an RQMC point set satisfying Assumption \ref{ass:rqmc} with a constant $B_d > 0$.
    Then, there exists a constant $C_{\mathrm{quad}}> 0$, independent of \(r\), such that
    \[
    \mathbb{E} \left|{I}_{N}(g\circ P_{r})-q(g\circ P_r)\right|^{p} \red{\le C_{\mathrm{quad}}} r^{pd}\exp\left(pM(1-\delta/2)^2r^2\right)\frac{(\log N)^{p(d-1)}}{N^p}.
    \]
\end{lemma}

\begin{proof}
    We first bound the Hardy--Krause variation of $h$. 
    By the Hardy--Krause variation in
\eqref{eq:hk}, it suffices to verify that the mixed partial derivatives
\(\partial^{1:d}h\) exists on $[0,1]^d$, and to control
\begin{equation}\label{eq:interhk}
    \sum_{\emptyset\ne\bm v\subseteq1:d}
    \int_{[0,1]^{|\bm v|}}
    \left|
        \partial^{\bm v}h(\bm u_{\bm v}:\bm1_{\bar{\bm v}})
    \right|
    d\bm u_{\bm v}.
\end{equation}
    Let $\x = \mathcal{T}(\bm{u})$. For $\emptyset \neq \bm{v} \subseteq 1:d$, we have
    $$
    \partial^{\bm{v}} h(\bm{u}) = \partial^{\bm{v}} (g\circ P_r)(\x) \prod_{j\in\bm{v}} \frac{d \mathcal{T}_j(u_j)}{d u_j} = \partial^{\bm{v}} (g\circ P_r)(\x) \prod_{j\in\bm{v}} \frac{1}{q_j(\mathcal{T}_j(u_j))}.
    $$
    By  a special form of the Faa di Bruno formula \cite{basu2016}, we have
    $$ 
    \partial^{\bm{v}} (g\circ P_r)(\x) = \sum_{\boldsymbol{\lambda} \in \mathbb{N}_0^d, 1 \leq|\boldsymbol{\lambda}| \leq|\bm{v}|}  D^{\bm{\lambda}}g(\bm{\vartheta})\sum_{s=1}^{|\bm{v}|} \sum_{{\mathrm{KL}}(s,\bm{v},\bm{\lambda})}\prod_{j=1}^{s} \partial^{\bm{\ell}_j} (P_r(\bm{x}))_{k_j},
    $$
    where $\bm{\vartheta} = P_r(\x)$ and 
    \begin{equation}\label{eq:KL}
            {\mathrm{KL}}(s,\bm{v},\bm{\lambda}) = 
        \left\{\begin{array}{r}
            \left(\bm{\ell}_j, k_j\right), j=1, \ldots, s, \mid \bm{\ell}_j \subseteq 1: d, k_j \in 1: d, \cup_{j=1}^{s} \bm{\ell}_j=\bm{v} \\
       \quad \bm{\ell}_j \cap \bm{\ell}_{j^{\prime}} =\emptyset \text { for } j \neq j^{\prime} \text { and }\left|\left\{l \in 1: s \mid k_l=i\right\}\right|= \lambda_i
        \end{array}
        \right\} .
        \end{equation} 
     By Lemma \ref{lem:pr}\ref{pr2}, 
    \(
    |\partial^{\bm{v}} (P_r(\x))_{i}| \le C_P(d,\delta) \mathds{1}_{\{\|\x\| < r\}},
    \) for any $\emptyset \neq \bm{v} \subseteq 1:d$ and $1\le i \le d$.
     Hence, 
    \(
    \left|\prod_{j=1}^{s} \partial^{\bm{\ell}_j} (P_r(\x))_{k_j}\right| \red{\le \left(\prod_{j=1}^s C_P(d,\delta)\right)} \mathds{1}_{\{\|\x\| < r\}} \le \red{C_P^{d}(d,\delta)}\mathds{1}_{\{\|\x\| < r\}}.
    \)
    Moreover, by Assumption \ref{ass:f} and Lemma \ref{lem:pr}\ref{pr4}, 
    \begin{shrinkeq}{-0ex}
            \[
    |D^{\boldsymbol{\lambda}}g(\bm{\vartheta})| \red{\le C_g} \exp(M\|P_r(\x)\|^2) \red{\le C_g} \exp\left(M(1-\delta/2)^2r^2\right).
    \]   
    \end{shrinkeq}
     As a result, there exists a constant $C_{d,\delta}>0$ such that 
    \begin{equation}\label{eq:dh}
        |\partial^{\bm{v}} h(\bm{u})| \le C_{d,\delta}C_g\exp\left(M(1-\delta/2)^2r^2\right)\mathds{1}_{\{\|\x\| < r\}}\prod_{j\in\bm{v}} 1/q_j(\mathcal{T}_j(u_j)).
    \end{equation}
    \red{Owing to the identity $\mathds{1}_{\{\|\x\| < r\}} = \mathds{1}_{\{\|\mathcal{T}(\bm{u})\| < r\}}$, the derivatives $\partial^{\bm{v}} h(\bm{u})$ vanish as $u_j \to 0^+$ or $u_j \to 1^-$. Consequently, $\partial^{\bm{v}} h(\bm{u})$ exists on $[0, 1]^d$. Now we control the sum of integrals in \eqref{eq:interhk}.} If \(\bar{\bm v}\ne\emptyset\), then at the upper anchor \(\bm u_{\bm v}:\bm1_{\bar{\bm v}}\), at least one coordinate of \(\mathcal{T}(\bm u)\) is infinite, and the indicator \(\mathds{1}_{\{\|\mathcal{T}(\bm u)\|<r\}}\) is zero. Therefore, $|\partial^{\bm{v}} h\left(\uu_{\bm{v}} ; \bm{1}_{\bar{\bm{v}}}\right)| = 0$. For \(\bm v=1:d\), using \eqref{eq:dh} and the change of variables \(x_j=\mathcal{T}_j(u_j)\), we have
    \begin{shrinkeq}{-0.5ex}
        \begin{align*}
     \int_{[0,1]^d}\left|\partial^{1:d} h\left(\uu\right)\right|d\uu 
    \le&~\red{C_{d,\delta}C_g}\exp\left(M(1-\delta/2)^2r^2\right)\int_{\|\mathcal{T}(\bm u)\|<r}\frac{1}{q(\mathcal{T}(\bm u))}d\bm u \\
    \le&~\red{C_{d,\delta}C_g}\exp\left(M(1-\delta/2)^2r^2\right)\int_{\|\x\|<r}1 d\x \\
    \le &~ \red{C_{d,\delta}C_gV_d}r^{d}\exp\left(M(1-\delta/2)^2r^2\right),
\end{align*}
    \end{shrinkeq}
where $V_d$ is the volume of the $d$-dimensional unit ball. Using the Hardy--Krause representation \eqref{eq:hk} completes the proof.
\red{Since the terms with \(\bar v\ne\emptyset\) vanish, the representation
of \(V_{\mathrm{HK}}\) in \eqref{eq:hk} gives}
   \begin{equation*}
   \begin{aligned}
       V_{\mathrm{HK}}\left(h\right) &\le \sum_{\emptyset \neq \vv \subseteq 1: d} \int_{[{0}, {1}]^{|\vv|}} \left|\partial^{\bm{v}} h\left(\uu_{\bm{v}} :\bm{1}_{\bar{\bm{v}}}\right)\right|d\uu_{\vv}\\
       &= \int_{[0,1]^{d}} \left|\partial^{1:d} h\left(\uu\right)\right|d\uu \red{~\le  C_{d,\delta}C_gV_d}r^{d}\exp\left(M(1-\delta/2)^2r^2\right).
   \end{aligned}  
\end{equation*}
By the Koksma--Hlawka inequality \eqref{eq:KH_inq} with Assumption \ref{ass:rqmc}, we obtain the result with $C_{\mathrm{quad}} = (B_dC_{d,\delta}C_gV_d)^p$.
\end{proof}

Now we turn to the proof of Theorem \ref{thm:rqmc}.
\begin{proof}[Proof of Theorem \ref{thm:rqmc}]\label{proof}
\red{Part~\(\mathrm{(i)}\) follows directly from
Lemma~\ref{lem:negative_bvhk}. We therefore prove
Part~\(\mathrm{(ii)}\). Assume that
\(0\le M<\alpha/p\) and that
\(\bm X\sim q \in \mathrm{SG}(\eta,\alpha)\). Let \(
    A:=\frac{\alpha}{p}-M>0.\)}
Combining \eqref{eq:err0}, \eqref{eq:err1}, and \eqref{eq:err2} with the results of Lemmas \ref{lem:err1} and \ref{lem:err2}, \red{and using $(a+b+c)^p \le 3^{p-1}(a^p+b^p+c^p)$ with $a,b,c\ge0$, we obtain, there exists a constant $r_0$ such that for $r > r_0$, }, $\mathbb{E}\left|{I}_{N}(g)-q(g)\right|^{p}$ is bounded by
    \begin{equation}\label{eq:errC}
        \red{C_0}\left(r^{\eta}\exp\left(-pA(1-\delta)^2r^2\right) + r^{pd}\exp\left(pM(1-\delta/2)^2r^2\right)\frac{(\log N)^{p(d-1)}}{N^p}\right),
    \end{equation}
    \red{where $C_0:=3^{p-1}(2C_{\rm tail}+C_{\rm quad})$, $C_{\rm tail}$ and $C_{\rm quad}$ are given in Lemmas \ref{lem:err1} and \ref{lem:err2}, respectively.
\rev{The projection radius \(r\) is a free parameter. The first term in
\eqref{eq:errC} is the truncation error and decreases exponentially in \(r^2\),
whereas the second term is the QMC quadrature error for the projected function
and increases exponentially in \(r^2\). We therefore choose \(r\) on the
logarithmic scale $r = \sqrt{\theta\log N}$, where \(\theta>0\) is a fixed tuning parameter independent of \(N\).} Substituting $r = \sqrt{\theta\log N}$ into \eqref{eq:errC} gives
\begin{equation}\label{eq:errc1}
    C_1\Big[
(\log N)^{\eta/2}
N^{-pA(1-\delta)^2\theta}
+
(\log N)^{p(3d/2-1)}
N^{-p+pM(1-\delta/2)^2\theta}
\Big],
\end{equation}
where $C_1
    :=
    C_0\max\{\theta^{\eta/2},\theta^{pd/2}\}$.
To make the two polynomial decay rates $N^{-pA(1-\delta)^2\theta}$ and $N^{-p+pM(1-\delta/2)^2\theta}$ identical, we equate the exponents by choosing}
\[
\theta = \theta^* = \frac{1}{(1-\delta)^2A+(1-\delta/2)^2M}.
\]
This choice yields a unified polynomial decay rate of $N^{-p\gamma_\delta}$, where 
\[
\gamma_\delta = A(1-\delta)^2\theta^* = \frac{A}{A+\left(1+\frac{\delta}{2(1-\delta)}\right)^2M}.
\]
Hence, by setting $r=\sqrt{\theta^*\log N}$, we obtain
    \begin{equation}\label{eq:errt}
        \mathbb{E}\left|{I}_{N}(g)-q(g)\right|^{p} \red{\le C_1} N^{-p\gamma_\delta} (\log N)^{\max \{\eta/2,p(3d/2-1)\}}.
    \end{equation}
\red{Now fix an arbitrarily small \(\epsilon>0\). Choose
\(\epsilon_1,\epsilon_2>0\) such that
\(
    \epsilon_1+\epsilon_2<\epsilon.
\) 
Let $\eta* = \max\{\eta/2,p(3d/2-1)\}$, $C_{\log} = \sup_{N\ge1}(\log N)^{\eta*}N^{-p\epsilon_1}$, we have 
\(
    (\log N)^{\eta*} \le C_{\log}N^{p\epsilon_1}.
\)
Furthermore, since
\[
    \gamma_\delta
    =
    \frac{A}
    {A+\left(1+\frac{\delta}{2(1-\delta)}\right)^2M}
    \to
    \frac{A}{A+M}
    =
    1-\frac{pM}{\alpha}
    \qquad \text{as } \delta\rightarrow 0,
\]
we may choose \(\delta\in(0,1/2)\)
sufficiently small so that
\(
    \gamma_\delta
    \ge
    1-\frac{pM}{\alpha}-\epsilon_2.
\)
Applying these two estimates to \eqref{eq:errt}, yields
\[
\mathbb{E}\left|I_N(g)-q(g)\right|^p
\le
C_{\mathrm{main}}(d)
N^{p(-\gamma+\epsilon)},
\]
where $\gamma = 1-pM/\alpha$ and $C_{\mathrm{main}}(d) = C_1C_{\log}.$
This completes the proof.
}
\end{proof}

\rev{
\begin{remark}\label{rem:necessity}
    Lemma~\ref{lem:negative_bvhk} shows that, if \(M<0\), then
    \(g\circ \mathcal{T}\) is already of bounded variation in the sense of Hardy and
    Krause. Hence no projection operator is needed in the negative-growth case. However, the radial projection is needed in the critical case \(M>0\). If the component-wise projection operator \(P_r^{\rm cw}\) was used, then
    \(\|P_r^{\rm cw}(x)\|^2\le d r^2\), and the Hardy--Krause bound in Lemma \ref{lem:err2} would contain
    the factor \(\exp(Mdr^2)\). Balancing the resulting projection and quadrature
    terms with \(r=\sqrt{\theta\log N}\) gives the convergence rate nearly $\mathcal{O}(N^{-p\gamma_{\rm{cw}}})$, 
    where $\gamma_{\rm{cw}} = \frac{A}{A+Md}$ and $A = \alpha/p - M$.
    Crucially, $\gamma_{\rm{cw}}$ deteriorates as \(d\) increases. In contrast, our radial operator ensures $\|P_r(x)\|^2 \le (1-\delta/2)^2r^2$ independently of $d$, preserving a dimension-independent convergence index $\gamma = \frac{A}{A+M} = 1-pM/\alpha$. 
\end{remark}
}

    \section{Some applications of $L_p$-error for RQMC}\label{appen:Lp}
        
        
        

        Using the $L_p$-error for RQMC integration, we can bound the skewness $\gamma$ and the kurtosis $\kappa$ in RQMC, which are important quantities in some properties of confidence intervals. For convenience, we focus on the case $f$ is ``QMC-friendly", which means that for any $\epsilon>0$ and $p\ge 1$, $
        \set{\E\left|{I}_{N}(f)-\pi(f)\right|^{p}}^{1/p} = \mathcal{O}\left(N^{-1+\epsilon}\right).
        $
        Assume that $\E[({I}_{N}(f)-\pi(f))^2] = \Omega(N^{-2})$. Then
        \begin{equation}\label{eq:ske}
             |\gamma| = \frac{|\E[({I}_{N}(f)-\pi(f))^3]|}{\E[({I}_{N}(f)-\pi(f))^2]^{3/2}} \le \frac{\E[|{I}_{N}(f)-\pi(f)|^3]}{\E[({I}_{N}(f)-\pi(f))^2]^{3/2}} = \frac{\mathcal{O}\left(N^{-3+\epsilon}\right)}{\Omega\left(N^{-3}\right)}= \mathcal{O}\left(N^{\epsilon}\right),
        \end{equation} 
        and,
        \begin{equation}\label{eq:kur}
             \kappa = \frac{\E[({I}_{N}(f)-\pi(f))^4]}{\E[({I}_{N}(f)-\pi(f))^2]^{2}} = \frac{\mathcal{O}\left(N^{-4+\epsilon}\right)}{\Omega\left(N^{-4}\right)} = \mathcal{O}\left(N^{\epsilon}\right).
        \end{equation}  
        
        Based on these, we can further analyze some properties of confidence intervals for RQMC. Recently, it has been observed that a standard Student's t based confidence interval is empirically more effective than bootstrap t interval \cite{l2023confidence}. Pan and Owen \cite{pan2025skewness} explained the empirical phenomenon by analyzing the skewness of RQMC. For a review of confidence intervals in the RQMC setting, see \cite{owen2024error}.
        
        In RQMC, we independently repeat the estimator $R$ times, yielding $R$ IID random variables $X_i = {I}_{N}^{(i)}(f)$. For $0 < a < 1/2$, the standard Student's t based confidence interval for $\pi(f)$ with nominal coverage $1-2 a$ takes the form
        $$
        \bar{X} \pm t_{(R-1)}^{1-a} S / \sqrt{R},
        $$
        where
        $$
        \bar{X}=\frac{1}{R} \sum_{i=1}^R X_i, \quad S^2=\frac{1}{R-1} \sum_{i=1}^R\left(X_i-\bar{X}\right)^2,
        $$
        and $t_{(R-1)}^{1-a}$ is the $1-a$ quantile of the Student's $t$ distribution on $R-1$ degrees of freedom.
        

        Pan and Owen \cite{pan2025skewness} considered the skewness for RQMC directly without an absolute value in the numerator. As a consequence, the result of \cite{pan2025skewness} cannot be applied to the classical Berry–Esseen bound \cite{bentkus1996berry}. In contrast, our estimation of skewness is based on the absolute third moment, which ensures that the Berry–Esseen bound can be applied. 
        Define the Student’s t statistic $T_R = \frac{\bar{X}-\pi(f)}{S / \sqrt{R}}$. A Berry-Esseen type bound for $T_R$ quantifies the worst-case deviation of its distribution to the standard normal distribution, given by $
        \Gamma_R =  \sup\limits_{x\in \mathbb{R}}| \PP{T_R<x}-\Phi(x)|.$
        By Theorem 1.1 of \cite{bentkus1996berry}, there exists an absolute constant $c>0$ such that 
        $$
        \sup\limits_{x\in \mathbb{R}}\abs{\PP{\frac{\bar{X}-\pi(f)}{\hat\sigma/\sqrt{R}}<x}-\Phi(x)} \le \frac{c}{\sqrt{R}}
        \frac{\E[|{I}_{N}(f)-\pi(f)|^3]}{\E[({I}_{N}(f)-\pi(f))^2]^{3/2}},
        $$
        where $\hat\sigma^2 = \frac{R-1}{R}S^2$. Hence, 
        \begin{align*}
             \Gamma_R &= \sup\limits_x
            \abs{\PP{\frac{\bar{X}-\pi(f)}{\hat\sigma/\sqrt{R}}<\sqrt{\frac{R-1}{R}}x}-\Phi(x)}\\
            & \le \sup\limits_x\abs{ \PP{\frac{\bar{X}-\pi(f)}{\hat\sigma/\sqrt{R}}<\sqrt{\frac{R-1}{R}}x}-\Phi\seq{\sqrt{\frac{R-1}{R}}x} }+\abs{\Phi\seq{\sqrt{\frac{R-1}{R}}x}-\Phi(x)}\\
            &\le \frac{c}{\sqrt{R}}
            \frac{\E[|{I}_{N}(f)-\pi(f)|^3]}{\E[({I}_{N}(f)-\pi(f))^2]^{3/2}}+\mathcal{O}(R^{-1}) = \mathcal{O}(N^{\epsilon})\mathcal{O}(R^{-1/2})+\mathcal{O}(R^{-1}).
        \end{align*}
        Since $\epsilon>0$ is arbitrarily small, $\mathcal{O}(N^{\epsilon})$ is almost $\mathcal{O}(1)$. Then, $\Gamma_R \approx \mathcal{O}(R^{-1/2})$.
        
  
        Another important quantity is the coverage error in the standard confidence interval, defined as
        $$
        \kappa_R = \PP{\bar{X}-\frac{S}{\sqrt{R}} t_{(R-1)}^{1-a} \leqslant \pi(f) \leqslant \bar{X}+\frac{S}{\sqrt{R}} t_{(R-1)}^{1-a}}-(1-2a).
        $$
        By  \cite{owen2025coverage}, the coverage error in Student's t interval is given by 
        $$
        \begin{aligned}
        \frac{z_{1-a}}{R}\left(\frac{z_a^2-3}{6} \kappa-\frac{z_a^4+2 z_a^2-3}{9} \gamma^2\right) \Phi'\left(z_a\right)+\mathcal{O}\left(R^{-3 / 2}\right).
        \end{aligned}
        $$
        where $z_a = \Phi^{-1}(a)$ and $\Phi'(x)$ is the standard normal density. By \eqref{eq:ske} and \eqref{eq:kur}, we have $\kappa_R = \mathcal{O}(N^{2\epsilon})\mathcal{O}(R^{-1})+\mathcal{O}(R^{-3/2}).$
        Since $\epsilon$ can be arbitrarily small, the convergence rate of $\kappa_R$ is nearly $\mathcal{O}(R^{-1/2})$.

        
\bibliographystyle{plain}
\bibliography{myreferences}

\end{document}